\theoremstyle{thmstyleone}%
\newtheorem{theorem}{Theorem}
\theoremstyle{thmstyletwo}%
\newtheorem{remark}{Remark}%
\theoremstyle{thmstylethree}%
\newtheorem{assumption}{Assumption}
\newtheorem{lemma}{Lemma}
\newcommand{\bbeta}{ \bm\beta}
\newcommand{\bu}{ \bm u}
\begin{document}
\title[Article Title]{Survival Analysis with Graph-Based Regularization for Predictors}

\author[1]{\fnm{Liyan} \sur{Xie}}\email{liyanxie@umn.edu}

\author[2]{\fnm{Xi} \sur{He}}\email{xi.isabel.he@gmail.com}

\author[2]{\fnm{Pinar} \sur{Keskinocak}}\email{pinar@isye.gatech.edu}

\author*[2]{\fnm{Yao} \sur{Xie}}\email{yao.xie@isye.gatech.edu}

\affil[1]{\orgdiv{Department of Industrial and Systems Engineering}, \orgname{University of Minnesota}, \orgaddress{ \country{United States.}}}

\affil[2]{\orgdiv{H. Milton Stewart School of Industrial and Systems Engineering}, \orgname{Georgia Institute of Technology}, \orgaddress{\country{United States}}}

\abstract{
We study the variable selection problem in survival analysis to identify the most important factors affecting survival time. Our method incorporates prior knowledge of mutual correlations among variables, represented through a graph. We utilize the Cox proportional hazard model with a graph-based regularizer for variable selection. We present a computationally efficient algorithm developed to solve the graph regularized maximum likelihood problem by establishing connections with the group lasso, and provide theoretical guarantees about the recovery error and asymptotic distribution of the proposed estimators. The improved performance of the proposed approach compared with existing methods are demonstrated in both synthetic and real organ transplantation datasets. }


\keywords{Graph regularizer, variable selection, Cox proportional hazard model}



\maketitle

\renewcommand{\thefootnote}{}
\footnotetext{\small \noindent This paper is dedicated to the memory of Professor Tze Leung Lai.}

\section{Introduction}
\label{sec:intro}

Survival analysis, a branch of statistics that deals with the analysis of time-to-event data, is a fundamental tool across various disciplines \cite{lai-book-survival,klein2003survival}. It has been used in various domains such as healthcare \citep{newcombe2017weibull,duan2018bayesian}, medical prognostic \citep{ohno2001modeling}, and manufacturing \citep{de2021data}. In such scenarios, the feature vector tends to be high-dimensional, and the variables tend to have complex correlations. 
This work studies variable selection in survival analysis with correlated covariates.
This is a fundamental problem in predictive modeling: when the feature vector is high-dimensional, it is crucial to select a subset of significant variables for model interpretation and predictability \citep{hastie2015statistical}.

Variable selection arises in a wide range of applications, including genetics \citep{tachmazidou2010bayesian}, healthcare \citep{newcombe2017weibull,duan2018bayesian}, and epidemiology \citep{greenland1989modeling,walter2009variable}. 
For example, in organ transplantation, we are interested in knowing which variable is useful in predicting the post-transplant survival time of the patient. In such cases, efficient identification of the key variables will be useful for better decision-making. However, the variables tend to be highly correlated, and their mutual correlation can be represented by an undirected {\it graph} known from prior knowledge or estimated from data. For instance, a real-data example of the correlation structures among the predicting variables for the organ transplant dataset is illustrated in Figure \ref{fig: deceased}. When such prior structural knowledge is available, incorporating it in variable selection may yield more precise results \citep{yu2016sparse}. Instead of selecting individual variables, the graph structure enables us to utilize the neighborhood information to estimate or select the variables jointly. 


Motivated by this, in this paper, we study variable selection for survival analysis when the variables are correlated through a graph structure. We consider the Cox proportional hazard model \citep{coxregression} with a {\it graph regularizer} to incorporate graph correlation structure between variables in the presence of complete observations and right-censored data.
%
Cox proportional hazard model \citep{coxregression,cox1975partial} has been widely studied in survival analysis literature. One line of existing work on variable selection focuses on the Bayesian procedures for censored survival data by applying different prior distributions on the coefficients. In \cite{faraggi1997large,faraggi1998bayesian}, the partial likelihood function is considered to avoid specifying the unknown baseline hazard function, and a normal prior is used to estimate the regression coefficients. In \cite{ibrahim1999bayesian}, the full likelihood function is considered, and they specify a nonparametric prior for the baseline hazard and a parametric prior for the regression coefficients. In \cite{giudici2003mixtures}, the mixtures of products of Dirichlet process priors are used to compare the explanatory power of each covariate.  In \cite{lee2011bayesian}, a special shrinkage prior based on normal and Gamma distributions is used to handle cases when the explanatory variables are of very high dimension. In \cite{nikooienejad2020bayesian}, a mixture prior is used, which is composed of a point mass at zero and an inverse moment prior. Other than using priors, a structure-based method using  Bayesian networks is proposed in \cite{lagani2010structure} for variable selection.

\begin{figure}[!t]
\begin{center}
\begin{tabular}{cc}
\includegraphics[width=0.3\textwidth]{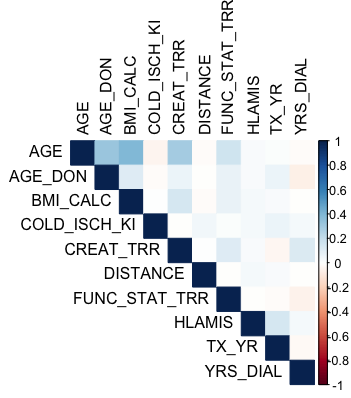} & \includegraphics[width=0.315\textwidth]{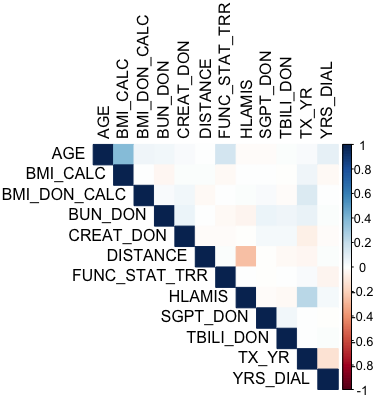}
\end{tabular}
\end{center}
\caption{Graph structure for correlation of variables in a pediatric kidney transplant data set: the inverse covariance matrix of the numerical variables in the living donor dataset (left) and the deceased donor dataset (right); more details are given in Section \ref{sec:data}.}
\label{fig: deceased}
\end{figure}


Another line of work performs variable selection through regularization, i.e., adding a regularization term into the likelihood function of the Cox model. 
The lasso regularization is applied for the Cox proportional hazard model in \cite{tibshirani1997lasso}. In \cite{fan2002variable}, the Smoothly Clipped Absolute Deviation (SCAD) regularization is proposed, and the resulted estimate is shown to have oracle property, i.e., the resulting estimate can correctly identify the true model; see \cite{fan2005overview} for an overview on such methods. In \cite{wu2012elastic}, the elastic net regularization is applied to the Cox model, and a solution path algorithm is developed; later in \cite{khan2016variable}, the adaptive elastic net is further applied for survival problems. In \cite{chaturvedi2014fused}, the fused lasso regularization is used, with applications to genomics. The adaptive lasso is applied in \cite{zhang2007adaptive}, and the consistency and convergence results are provided. In survival applications involving categorical variables, the group lasso regularization \citep{yuan2006model} is also often used; see \cite{kim2012analysis,utazirubanda2019variable} for examples.
In \cite{sun2014network}, a network-based regularizer is used for the Cox model, which contains an $\ell_1$ penalty and a quadratic Laplacian penalty. Compared with \cite{sun2014network}, which mainly penalizes the differences of coefficients between neighboring variables, we consider subgroups of the variable and impose an overall penalty for each group. It is also worth noting that non-parametric methods, such as the Kaplan-Meier estimator \citep{kaplan1958nonparametric}, Nelson-Aalen estimator \citep{nelson1972theory,aalen1978nonparametric}, and tree-based methods \citep{hothorn2004bagging, GRF-AoS}, provide flexible, model-free approaches for survival analysis. While these methods are effective in capturing non-linear effects and high-dimensional interactions, they are typically not designed for explicit variable selection.


The graph-based regularizer we use in this paper has been studied previously in \cite{yu2016sparse,li2020graph}, where only linear regression models are considered. 
We construct the variable graph through prior knowledge or estimating the correlation between predicting variables, represented as $G=(V, E)$ where $V$ and $E$ are the node and edge set, respectively. A node $v \in V$ represents a predicting variable and an edge $(u,v)$ connecting two nodes in the graph if the corresponding variables are correlated. 
We estimate the model parameter through the graph regularized maximum likelihood problem, which can be solved efficiently by connecting with group lasso (similar to the approach in \cite{yu2016sparse}). We establish the performance guarantee for model recovery error and the asymptotic normality of the estimated parameter. The good performance of the proposed method compared with baselines, such as methods without considering the graph correlation structure among variables, is demonstrated using both synthetic data and a real-data example of organ transplantation. 

We would like to remark that compared with linear regression problems with similar graph regularizer \citep{yu2016sparse}, the extension to survival analysis is non-trivial since the likelihood function under the Cox model is more involved than the least-square objective in linear regression. The log-likelihood function here can be represented by a counting process, and additional assumptions are needed to obtain desired local properties for the likelihood function. 
To address these challenges, we utilize techniques from survival analysis for Cox models \citep{fan2002variable}, and consider a general setting where covariates may change over time and rewrite the likelihood function using local asymptotic quadratic properties. 
In terms of the algorithm, the regularized likelihood can be optimized by adapting the predictor duplication method \citep{yu2016sparse,obozinski2011group} or efficiently using the fast iterative shrinkage thresholding algorithm \citep{beck2009fast}. To the best of our knowledge, our work is the first to apply such a graph regularizer for survival analysis, and it is a new contribution to variable selection in survival analysis. The introduction of a graph regularizer is crucial because, in many situations, including targeted healthcare applications, variables tend to have highly complex dependency structures. Our work provides an indispensable tool for performing variable selection in such cases.

The rest of the paper is organized as follows. 
Section~\ref{sec:lai} reviews the related seminal work by Professor Tze Leung Lai.  
Section~\ref{sec:prelim} reviews the preliminaries on survival analysis and graph-based regularization. 
Section~\ref{sec:method} presents the proposed regularization method for the Cox model, together with discussions on efficient algorithms for solving the coefficients estimate based on a predictor duplication method. 
Section~\ref{sec:theory} contains the main theoretical results, including guarantees for the accuracy and consistency of the maximum regularized likelihood estimate. 
Section~\ref{sec:simulation} contains numerical results comparing different methods using simulation. 
Section~\ref{sec:data} presents the application of the proposed method to two real data examples: the pediatric kidney transplant data and the primary biliary cirrhosis sequential data, and compares performance with other regularization methods. Section~\ref{sec:conclusion} concludes the paper. All proofs and additional numerical details are delegated to the appendix. 

\subsection{Lai's Work on Survival Analysis}\label{sec:lai}

Professor Tze Leung Lai has made substantial contributions to survival analysis among his diverse achievements in statistics. In this subsection, we provide a brief overview of his seminal work and its impact on our work. 
Professor Lai was instrumental in advancing the theory and methodology of survival analysis, mainly through his extensive studies on censored and truncated data. His research not only deepened the theoretical foundations of the field but also enhanced its applicability in various domains, including clinical trials and reliability engineering.

Professor Lai's research includes estimating the distribution function for a random variable $X$ based on censored or truncated observations. For censored data, observations take the form $(x_i \wedge t_i, \delta_i=\mathbbm{1}_{\{x_i \leq t_i\}})$ with $t_i$ being independent random variables and $\mathbbm{1}\{\cdot\}$ is the indicator function; for truncated data, observations are $(x_i, t_i)$ with variables observable only when $x_i\geq t_i$. In this paper, we mainly focus on right-censored data. Lai's work establishes statistical guarantees for the product-limit estimator of the distribution function of $X$ from such data  \cite{gu1990functional,lai1991estimating}, and for the non-parametric estimation of trimmed functionals of the conditional distribution of $X$ \cite{gross1996nonparametric}. Lai also extended the bootstrap method to the truncated and censored data \cite{gross1996bootstrap}.

Another significant area of Lai's research focuses on parameter estimation for censored linear regression models, where the response variable has a linear dependence on the covariate, modeled as $y_i = \bbeta^\top {\bf x}_i + \varepsilon_i$, and $y_i$ is subject to censoring or truncation \cite{lai1988stochastic}. The rank estimator for the slope $\bbeta$ was studied in 
\cite{lai1991rank,lai1992linear}, and the 
Modified Buckley-James Estimator was studied in \cite{lai1991large}. Furthermore, for censored or truncated regression with vector-valued coefficient $\bbeta$, various estimators have been proposed or studied, including the asymptotically efficient estimators \cite{lai1992asymptotically}, a bias-corrected least squares estimator \cite{lai1992asymptotic}, $M$-estimators \cite{lai1994missing,gross1996nonparametric,kim2000efficient}, and estimators resulted from estimating equations \cite{lai1995asymptotic}. 

Under the Cox proportional hazard model, which we utilize in this work, the confidence interval for the model parameter $\bbeta$ after a time-sequential test is established in \cite{lai2006confidence}. The confidence interval for the median survival times is also presented in \cite{lai2006confidence2}. An alternative approach that models the cumulative incidence function instead of the hazard function is proposed to obtain an asymptotically normal and efficient estimator of the regression parameter $\bbeta$ \cite{jin2017new}. Furthermore, Lai also studied the two-sample and sequential tests in clinical trials, comparing survival time between two treatment groups, with rank statistics based on censored data \cite{gu1991weak,gu1991rank,gu1998repeated}. The proposed Cox model with a graph-based regularizer can be potentially extended and applied to such test problems.

\section{Preliminaries}\label{sec:prelim}

\subsection{Cox Proportional Hazard Model}

We first introduce the basic notations in survival analysis and the Cox proportional hazard model \citep{coxregression} to be used. Denote $T$ as the event (failure) time. Throughout the text, organ transplantation can be used as an example to illustrate the models and methods, where $T$ refers to the post-transplant survival time of a patient. Assume $T$ is a random variable with cumulative distribution function (cdf) $F(t) = \mathbb P (T \leq t)$, and probability density function (pdf) $f(t) = F'(t) = dF(t)/dt$.
Define the event function as the upper tail probability
$S(t) = \mathbb P (T >t) = 1 - F(t)$,
and the hazard function as 
\begin{equation}\label{eq:hazard}
h(t) = \frac{f(t)}{S(t)} = - \frac{S'(t)}{S(t)} = -\frac{d (\log S(t))}{d t}.
\end{equation}
Denote the cumulative hazard function as $H(t) = \int_0^t h(u) du$. By taking integral on both sides of \eqref{eq:hazard}, we have
\[
S(t) = \exp \{-H(t)\}.
\]

Data is given in the form $(y_{1}, \delta_{1}, \bm{x}_{1}), \dots, (y_{n}, \delta_{n}, \bm{x}_{n})$, where $y_{i}$ is the time until the event, $\delta_{i} =1$ indicates a complete observation and $\delta_{i} =0$ a right-censored observation, $\bm{x}_{i} = [x_{i1},\dots,x_{ip}]^{\top}$ is the $p$-dimensional vector of predictors (covariates) for observation $i$, and $n$ denotes the sample size. For simplicity, assume that there are no tied event times. In the organ transplant example, an observation is a transplant (patient-organ pair),  $\bm{x}_{i}$ includes patient and donor/organ characteristics, and $y_i$ is the post-transplant survival time.
Given data $\{ (y_i,\delta_i,\bm{x}_i) \}_{i=1}^n$, by the definition of hazard in \eqref{eq:hazard}, the likelihood function is given by
\begin{equation}\label{eq:full_likelihood}
\begin{aligned}
L(\{ (y_i,\delta_i,\bm{x}_i) \}_{i=1}^n) & = \prod_{i: \delta_i = 1} f(y_i | \bm{x}_i) \prod_{i: \delta_i = 0} S(y_i | \bm{x}_i) = \prod_{i: \delta_i = 1} h(y_i | \bm{x}_i) \prod_{i=1}^n S(y_i | \bm{x}_i).
\end{aligned}
\end{equation}

Throughout this paper, we utilize the Cox proportional hazard model \citep{coxregression} defined as follows. The hazard function at time $t$, given covariate $\bm{x}_{i}$, takes the form
\begin{align}\label{eq:cox_hazard}
h(t|\bm{x}_{i}) = h_{0}(t)\exp(\boldsymbol{\beta_0}^{\top}\bm{x}_{i}),
\end{align}
where $h_{0}(\cdot)$ is the baseline hazard function, and $\boldsymbol{\beta_0} = [\beta_{0,1}, \dots, \beta_{0,p}]^{\top}$ is the vector of true coefficients. 
Let $H_0(t) = \int_0^t h_0(u) du$. Then the cumulative hazard function can be written as $H(t|\bm{x}_i) = H_0(t)\exp(\bm{\bbeta_0}^{\top}\bm{x}_i)$, and we have
\begin{equation}\label{eq:cox_survial_S}
   S(y_i | \bm{x}_i) =  \exp \{ -H_0(y_i)\exp(\bbeta_0^{\top}\bm{x}_i)\}. 
\end{equation}
Substitute \eqref{eq:cox_survial_S} and \eqref{eq:cox_hazard} into the likelihood function \eqref{eq:full_likelihood}, we obtain the full log-likelihood function under any parameter vector $\bm\beta$,
\begin{equation}\label{eq:full_llh}
\log L(\{ (y_i,\delta_i,\bm{x}_i) \}_{i=1}^n) = \sum_{i: \delta_i = 1} [\log h_0(y_i) + \bm{\bbeta}^{\top}\bm{x}_i] - \sum_{i=1}^n H_0(y_i)\exp(\bm{\bbeta}^{\top}\bm{x}_i).
\end{equation}
Our goal is to infer the unknown parameters $\bbeta$ from observations $\{ (y_i,\delta_i,\bm{x}_i) \}_{i=1}^n$. Taking the organ transplant as an example, we aim to identify which characteristics of the patient and donor/organ affect the post-transplant survival time the most.

\subsection{Partial Likelihood Function}

The baseline hazard function $h_0(\cdot)$ is usually {\it unknown} and has not been parameterized. Therefore, we adopt the commonly used partial likelihood function \citep{cox1975partial} instead of the full log-likelihood shown in \eqref{eq:full_llh}. The partial likelihood function is defined as the probability of the event being observed for observation $i$ at time $y_{i}$,
\begin{align*}
L_{i}(\boldsymbol{\beta}) = \frac{h(y_{i}|\bm{x}_{i})}{\sum_{j: y_{j}\ge y_{i}}h(y_{i}|\bm{x}_{j})} = \frac{\exp(\boldsymbol{\beta}^{\top}\bm{x}_{i})}{\sum_{j: y_{j}\ge y_{i}}\exp(\boldsymbol{\beta}^{\top}\bm{x}_{j})}.
\end{align*} 
Assuming independence of the observations, the joint partial likelihood function becomes
\begin{align*}
L(\boldsymbol{\beta}) = \prod_{i: \delta_{i} = 1}L_{i}(\boldsymbol{\beta})= \prod_{i: \delta_{i} = 1}\frac{\exp(\boldsymbol{\beta}^{\top}\bm{x}_{i})}{\sum_{j: y_{j}\ge y_{i}}\exp(\boldsymbol{\beta}^{\top}\bm{x}_{j})},
\end{align*} 
and the partial log-likelihood is given by
\begin{equation}\label{eq:likelihood}
\ell(\boldsymbol{\beta}) = \sum_{i=1}^{n} \delta_{i}\Big\{\boldsymbol{\beta}^{\top}\bm{x}_{i} -\log\Big(\sum_{j: y_{j}\ge y_{i}}\exp(\boldsymbol{\beta}^{\top}\bm{x}_{j})\Big) \Big\}.
\end{equation} 
Another interpretation of \eqref{eq:likelihood}, as given in \cite{fan2002variable}, is to substitute the ``least informative'' nonparametric prior for the unknown baseline cumulative hazard $H_0(\cdot)$. We use the formulation in \eqref{eq:likelihood} for optimization in the remainder of this paper. 


\subsection{Regularized Maximum Likelihood Estimation}\label{sec:regularizers}

In practice, the regularization-based method is commonly used to find the maximum likelihood fit of the Cox model. We solve the following optimization problem:
\begin{align}
\label{eqn:opt}
\min_{\boldsymbol{\beta}} -\frac{1}{n}\ell(\boldsymbol{\beta}) + g(\boldsymbol{\beta}),
\end{align}
where $\ell(\boldsymbol{\beta})$ is the partial log-likelihood function and $g(\boldsymbol{\beta})$ is the regularization term. A selected subset of potential representations for the regularization function $g(\boldsymbol{\beta})$ includes:
\begin{enumerate} 
\item Lasso \citep{tibshirani1997lasso}: $g(\boldsymbol{\beta}) = \lambda\Vert\boldsymbol{\beta}\Vert_{1}$, which encourages sparse solutions and the sparsity level can be controlled by the regularization parameter $\lambda>0$.
\item SCAD regularization \citep{fan2002variable}: $g(\boldsymbol{\beta}) = \sum_{j=1}^p f_\lambda(|\beta_j|)$, where \[f_\lambda'(\theta) = \mathbbm{1}(\theta \leq \lambda) + \frac{(a\lambda-\theta)_{+}}{(a-1)\lambda}\mathbbm{1}(\theta > \lambda), a >2, \ \theta > 0,\]
and $(x)_+ = \max\{x, 0\}$.
\item
Elastic net \citep{wu2012elastic}: $g(\bbeta) = \frac{\gamma}2\sum_{j=1}^p \beta_j^2 + \lambda\sum_{j=1}^p |\beta_j|$.

\item
Fused lasso \citep{chaturvedi2014fused}: $g(\bbeta) = \lambda_1 \sum_j |\beta_j| + \lambda_2\sum_{j=1}^{p-1}| \beta_{j+1} - \beta_j |$.

\item
Adaptive lasso \citep{zhang2007adaptive}: $g(\bbeta) = \lambda \sum_{j=1}^p \tau_j|\beta_j| $ with positive weights $\tau_j$.  

\item
Group lasso \citep{yuan2006model}: $g(\bbeta) = \lambda \sum_{k = 1}^{p}\Vert \bbeta_{\mathcal{I}_{k}}\Vert_{2}$, where $\mathcal{I}_{k}$ is the set of variables belonging to the $k^{\text{th}}$ group.
\end{enumerate}

It is worth mentioning that those classical regularization terms typically do {\it not} consider potential correlations between different predictors. When prior information about the relation between different predictor variables is available, represented by a predictor graph, we propose to adopt the graph-based regularization to estimate the model parameters better, as detailed in the following section.

\section{Graph-Based Regularization for Cox Model}\label{sec:method}

We first introduce the graph-based regularization given {\it known} or {\it pre-estimated} predictor graph for the {\it correlated} covariates in the Cox model. Then, we show that the regularized optimization problem can be solved efficiently.

Let $X = (\bm{x}_{1},\dots, \bm{x}_{n})^{\top} = (X_{1}, \dots, X_{p}) \in \mathbb{R}^{n\times p}$, with $X_{1}, \dots, X_{p}$ being column vectors and each column corresponds to a variable and its values across $n$ observations (i.e., patient-organ pairs). Assume a {\it known} covariance structure among $X_{1}, \dots, X_{p}$. 
For instance, in the organ transportation data set detailed in Section~\ref{sec:pediatric}, Figure~\ref{fig: deceased} shows an example of the correlation between predicting variables. To represent such correlations among the predictors, we can construct an {\it undirected} and unweighted graph $G=(V,E)$ where $V$ and $E$ denote the nodes and edges, respectively.
Such a graph can be constructed either by sample estimate or by domain knowledge.
There is a node $i\in V$ for each variable $i$ and an edge $(i,j)\in E$ if variables $i$ and $j$ are correlated. Let $E_G$ be the  matrix representing the edge set, where $E_G(i,j) = 1$ if $(i,j) \in E$ or $i=j$, and $0$ otherwise. Let $\mathcal{N}_{i}=\{j: E_G(i,j) = 1\}$ denote the neighbors of node $i$ and let $d_i = |\mathcal N_i|$ denote the cardinality of the set $\mathcal N_i$. 

%
%
\begin{remark}[Motivation for Graph-based Regularization]\label{rem1} Our usage of graph-based regularization is inspired by its usage in the linear regression setting as studied in \cite{yu2016sparse}. Here, we present a specific justification under the Cox model. Assume a random design covariate $\bm{x}\sim N(0_{p\times 1},\Sigma)$ and observation $(\bm{x},y)$, which for simplicity we assume uncensored, then the survival function in \eqref{eq:cox_survial_S} implies that $Z:= H_0(y)\exp\{\bbeta_0^\top \bm{x}\}$ follows the Exponential distribution with mean $1$, conditioned on $\bm{x}$, where $\bbeta_0$ is the true coefficients vector. Therefore,
\[
\Sigma_{xy}:= \mathbb E[- \bm{x} \log H_0(y)] = \mathbb E\left[ \bm{x} \cdot \mathbb E [ \bm{x}^\top \bbeta_0 - \log Z | \bm{x}] \right] = \mathbb E[ \bm{x}\bm{x}^\top ] \bbeta_0 = \Sigma \bbeta_0,
\]
which yields $\bbeta_0 = \Omega \Sigma_{xy}$. Here we denote $\Omega=\{\omega_{ij}\}_{i,j=1,2,\ldots,p}=\Sigma^{-1}$. By definition, the inverse covariance matrix $\Omega$ measures partial correlations among predictors, and $\Sigma_{xy}=(c_1,\ldots,c_p)^\top$ is a constant vector represents the marginal correlations between covariates $\bm{x}$ and $\log H_0(y)$, a function of the corresponding survival time. Consequently, we can decompose $\bbeta_0 = \Omega \Sigma_{xy}$ into $p$ parts: 
\[
\bbeta_0 = \sum_{i=1}^p c_i \Omega_{\cdot i},\ \text{where } \Omega_{\cdot i} \text{ is the $i$-th column of $\Omega$.}
\]
For a given predictor graph, we have $\omega_{ij}\neq 0$ if and only if the $i$-th and $j$-th predictor variable is uncorrelated. Therefore, the support of $\Omega_{\cdot i}$ is $\mathcal N_i$, the neighbors of node $i$. This motivates us to decompose the parameter $\bbeta$ into $p$ latent parts, with the $i$-th part supported only on the neighborhood set $\mathcal N_i$, as detailed next.    
\end{remark}

Before introducing our main optimization model, we first recall that our goal is to estimate the parameter $\bm\beta$ by solving a regularized optimization problem as shown in \eqref{eqn:opt}. 
From the insights in Remark \ref{rem1}, we adopt the following norm of $\bbeta$ which was used in \cite{yu2016sparse} by incorporating the additional correlation information on $X$ (captured by the graph $G$), for a given collection of non-negative weights $\bm\tau:=\{\tau_1,\tau_2,\ldots,\tau_p\}$:
\begin{equation}\label{eq:norm}
\Vert \bbeta \Vert_{G,\bm\tau} := \min_{\sum_{k=1}^pV^{(k)} = \bbeta,\ \text{supp}(V^{(k)}) \subseteq \mathcal N_k} \sum_{k=1}^p \tau_k \Vert V^{(k)} \Vert_2.
\end{equation}
Intuitively, $\bbeta$ is decomposed into $p$ terms: $\bbeta = \sum_{k=1}^p V^{(k)}$. For the $k$-th predictor variable, the corresponding term $V^{(k)}$ characterizes the effect of the $k$-th predictor variable on the survival time; if $V^{(k)}$ is non-zero, then the support of $V^{(k)}$ implies a connection between the $k$-th predictor and its neighbors in the graph $G$, i.e., $V^{(k)}$ is only supported on $\mathcal N_k$. The parameter $\tau_{k}\geq0$ is the regularization parameter that controls the importance of the regularization term $\Vert V^{(k)} \Vert_2$ for the $k$-th predictor. 
%
It can be verified that $\Vert \bbeta \Vert_{G,\bm\tau}$ satisfies the triangle inequality and is indeed a valid norm \citep{obozinski2011group}. 

Let the regularization term in \eqref{eqn:opt} be $g(\bm\beta) = \lambda\Vert \bbeta \Vert_{G,\bm\tau}$ for a regularization parameter $\lambda\geq0$, then we estimate the parameter $\bbeta$ by solving
\begin{equation}\label{eq:main}
\min_{\bbeta \in \mathbb{R}^p}  
-\frac{1}{n} \ell(\boldsymbol{\beta}) + \lambda \Vert \bbeta \Vert_{G,\bm\tau},
\end{equation}
which, by the definition in \eqref{eq:norm}, is also equivalent to
\begin{equation}\label{eqn:opt_graph}
\begin{aligned}
\min_{\boldsymbol{\beta}, V^{(1)}, \dots, V^{(p)}} & 
-\frac{1}{n} \ell(\boldsymbol{\beta}) + \lambda\sum_{k=1}^{p}\tau_{k}\Vert{V}^{(k)}\Vert_{2},\\
 \text{s.t.} \quad &
 \sum_{k= 1}^{p} V^{(k)} = \boldsymbol{\beta}, \quad \text{supp}(V^{(k)}) \subset \mathcal{N}_{k}, \ \forall k. 
\end{aligned}
\end{equation}

It is worth mentioning that the regularization term $\Vert \bbeta \Vert_{G,\bm\tau}$ is very general since it will be reduced to adaptive Lasso when there is no edge in the graph $G$, to group lasso when the graph $G$ has several disconnected complete subgraphs, and reduced to ridge regression when the graph is a complete graph \citep{yu2016sparse}.


The optimization problem in  (\ref{eqn:opt_graph}) could be reformulated to an unconstrained convex problem such that it can be solved efficiently using existing solvers. This technique is developed based on the {\it predictor duplication} technique in \cite{obozinski2011group}. For the observation $\bm{x}_i$ and the $k$-th predictor, let $\bm{x}_{\mathcal{N}_{k}}^{i}$ be the $|\mathcal{N}_{k}|\times 1$ subvector of $\bm{x}_{i}$, with indices from $\mathcal{N}_{k}$. Similarly, let $V^{(k)}_{\mathcal{N}_{k}}$ be the $|\mathcal{N}_{k}|\times 1$ subvector of $V^{(k)}$. Recall that we have the constraint $\text{supp}(V^{(k)}) \subset \mathcal{N}_{k}$, thus the subvector  $V^{(k)}_{\mathcal{N}_{k}}$ contains all non-zero values of the vector $V^{(k)}$. 
Then 
\[\sum_{i = 1}^{n}{\boldsymbol{\beta}}^{\top}\bm{x}_{i} = \sum_{i=1}^{n}\sum_{k=1}^{p}{V^{(k)}_{\mathcal{N}_{k}}}^{\top}\bm{x}^{i}_{\mathcal{N}_{k}},\] and the partial log-likelihood function in \eqref{eq:likelihood} can be rewritten as 
\begin{equation}\label{eqn:opt_pdm} 
\begin{aligned}
\ell(\boldsymbol{\beta}) = \sum_{i=1}^{n} \delta_{i}\Big\{ & \sum_{k=1}^{p}{V^{(k)}_{\mathcal{N}_{k}}}^{\top}\bm{x}^{i}_{\mathcal{N}_{k}}- \log\Big(\sum_{j: y_{j}\ge y_{i}}\exp(\sum_{k=1}^{p}{V^{(k)}_{\mathcal{N}_{k}}}^{\top}\bm{x}^{j}_{\mathcal{N}_{k}})\Big) \Big\}.
\end{aligned}     
\end{equation}

Therefore, the optimization problem \eqref{eqn:opt_graph} reduces to the unconstrained optimization problem with new duplicated variables $\{V_{\mathcal{N}_1}^{(1)}, \dots, V_{\mathcal{N}_p}^{(p)} \}$:
\[
\begin{aligned}
\min_{V_{\mathcal{N}_1}^{(1)}, \dots, V_{\mathcal{N}_p}^{(p)}}  \quad &-\frac{1}{n}   \sum_{i=1}^{n} \delta_{i}\Big\{
 \sum_{k=1}^{p}{V^{(k)}_{\mathcal{N}_{k}}}^{\top}\bm{x}^{i}_{\mathcal{N}_{k}}-\log\Big(\sum_{j: y_{j}\ge y_{i}}\exp(\sum_{k=1}^{p}{V^{(k)}_{\mathcal{N}_{k}}}^{\top}\bm{x}^{j}_{\mathcal{N}_{k}})\Big) \Big\} \\
&  + \lambda\sum_{k=1}^{p}\tau_{k}\Vert V^{(k)}_{\mathcal{N}_{k}} \Vert_{2},
\end{aligned}
\]
which can be solved using existing solvers for the group lasso regularization, such as the \texttt{R} package \texttt{grpreg} \citep{breheny2016package}.
After obtaining the optimal solution to the above unconstrained problem, denoted as ${\widehat{V}^{(k)}_{\mathcal{N}_{k}}},k=1,\ldots,p$, we let ${\widehat{V}^{(k)}_{\mathcal{N}_{k}^{c}}} = 0$ and the optimal parameter is $\widehat{\bbeta} = \sum_{k=1}^{p} {\widehat{V}^{(k)}}$. Note that when the neighborhood of some nodes is exactly the same, the decomposition of $\bbeta$ may not be unique. However, different decompositions lead to the same estimate $\bbeta$.  
Although the predictor duplication method is simple to use and can be solved using existing solvers, the dimension of the variables after duplication can be high when $p$ is high and the graph $G$ is dense. In such cases, we provide an alternative method by applying the fast iterative shrinkage thresholding algorithm (FISTA) \cite{beck2009fast} in Appendix \ref{app}.

\section{Theoretical Guarantees}\label{sec:theory}

In this section, we provide the theoretical properties for the estimate $\widehat\bbeta$ solved from \eqref{eqn:opt_graph}. 
Denote $\bbeta_0 =[ \beta_{01},\ldots,\beta_{0p}]^{\top}$ as the true parameters which is unknown, $J_0 = \{i: \beta_{0i} \neq 0 \}$ is the index of non-zero parameters, $J_0^c = \{ i: \beta_{0i} = 0 \}$ is the index of zero parameters, and $s_0 = |J_0|$ denotes the number of non-zero parameters. 

We first introduce some useful notations and results for the parameter $\bbeta$.  For any given $\bm\beta$ and non-negative weights vector $\bm\tau$, we note that the norm $\Vert \bbeta \Vert_{G,\bm\tau}$ of $\bm\beta$ as defined in \eqref{eq:norm} is computed based on the optimal decompositions $\{V^{(1)},\ldots, V^{(p)}\}$ of $\bbeta$ such that  $\bbeta=\sum_{k=1}^pV^{(k)}$ and $ \text{supp}(V^{(k)}) \subseteq \mathcal N_k$ for each $k$. Let $\mathcal U(\bbeta)$ denotes the set of all such decompositions of $\bbeta$ that minimizes $\sum_{k=1}^p \tau_k \Vert V^{(k)} \Vert_2$. In other words, $\mathcal U(\bbeta)$ consists of all optimal solutions to the optimization problem \eqref{eq:norm}:
\[
\begin{aligned}
\mathcal U(\bbeta) = \Big\{   \{V^{(1)},\ldots, V^{(p)}\}:\ &  \bbeta=\sum_{k=1}^pV^{(k)},\text{supp}(V^{(k)}) \subseteq \mathcal N_k, \\
&\sum_{k=1}^p \tau_k \Vert V^{(k)} \Vert_2 =  \Vert \bbeta \Vert_{G,\bm\tau}\Big\}.
\end{aligned}
\]

Denote $T,C,\bm{x}$ as the survival time, censoring time, and the associated covariates, respectively. 
We adopt the usual {\it counting process} notation and theory to derive the theoretical guarantee for the resulting estimate. More specifically, define the counting process $N_i(t) = \mathbbm{1}\{T_i \leq t, T_i \leq C_i \}$ and the indicator for being at risk $Y_i(t)= \mathbbm{1}\{ T_i \geq t, C_i \geq t \}$ for $i=1,\ldots,n$. Without loss of generality, we only consider the bounded time horizon $[0,1]$. The results can be extended to unbounded time interval $[0,\infty)$ and the general setting where the covariate $\bm{x}(t)$ can vary over time \citep{andersen1982cox}. Then, the partial log-likelihood function in \eqref{eq:likelihood} can be rewritten by the counting process as
\[
\begin{aligned}
\ell(\boldsymbol{\beta}) = & \sum_{i=1}^{n} \int_0^1\boldsymbol{\beta}^{\top}\bm{x}_{i}dN_i(t) -\int_0^1 \log\Big(\sum_{i=1}^nY_i(t)\exp\left(\boldsymbol{\beta}^{\top}\bm{x}_{i}\right)\Big) d\bar N(t),
\end{aligned}
\]
where $d N_i(t)$ is the increment over the infinitesimal interval $[t,t+dt)$ and it is either zero or one for the counting process $N_i$; $\bar N = \sum_{i=1}^nN_i$, and $d \bar N(t) = \sum_{i=1}^n d N_i(t)$. 

To simplify the notation, we define 
\begin{equation}\label{eq:def_s}
\begin{aligned}
s^{(0)} (\bbeta,t) &= \mathbb E\left[Y(t)\exp\{\bbeta^{\top}\bm x\}\right],  \\
s^{(1)} (\bbeta,t) &= \mathbb E \left[ Y(t)\bm x \exp\{\bbeta^{\top}\bm x\} \right],  \\
s^{(2)} (\bbeta,t) &= \mathbb E \left[ Y(t) \bm x\bm x^{\top}\exp\{\bbeta^{\top}\bm x\} \right].
\end{aligned}    
\end{equation}
For the Cox model, we adopt similar assumptions for the partial log-likelihood function as in \cite{fan2002variable,sun2014network}. 
\begin{assumption}[Assumptions for the partial likelihood function]\label{assup1} 
We assume that:
\begin{enumerate} 
\item $\int_0^1 h_0(t) dt < \infty $, and $\mathbb P\{ Y(t) = 1, \forall t\in[0,1] \} > 0$.
\item Covariates $x_j$, $j=1,\ldots,p$ are bounded and there exists a constant $M>0$ such that $\left\Vert\bm{x}\right\Vert_1 \leq M$. 
\item 
 There exists a neighborhood $\mathcal B\subset \mathbb{R}^{p}$ of $\bbeta_0$ such that
\[
\mathbb E \left\{\sup_{t\in[0,1], \ \bbeta \in \mathcal B} Y(t)\bm x^{\top} \bm x \exp\{\bbeta^{\top}\bm x\}\right\} < \infty. 
\]
\item $s^{(0)} (\cdot,t)$, $s^{(1)} (\cdot,t)$, $s^{(2)} (\cdot,t)$ are continuous in $\bbeta \in \mathcal B$, uniformly in $t\in[0,1]$; $s^{(0)}, s^{(1)}, s^{(2)}$ are bounded on $\mathcal B \times [0,1]$; $s^{(1)}$ is bounded away from zero on $\mathcal B \times [0,1]$. 
The information matrix 
\[
\begin{aligned}
&I(\bbeta_0) \!=\! \int_0^1  \Bigg(\frac{s^{(2)} (\bbeta_0,t) }{s^{(0)} (\bbeta_0,t) } \!-\!  \big( \frac{s^{(1)} (\bbeta_0,t) }{s^{(0)} (\bbeta_0,t) }  \big)\big( \frac{s^{(1)} (\bbeta_0,t) }{s^{(0)} (\bbeta_0,t) }  \big)^{\top}\Bigg) s^{(0)} (\bbeta_0,t)h_0(t)dt
\end{aligned}
\]
is {\it positive definite}. 
\end{enumerate}
\end{assumption}

The reason for imposing the above assumptions is to obtain the local asymptotic quadratic property for the partial likelihood function $\ell(\bbeta)$, as well as the asymptotic normality of the maximum partial likelihood estimates \citep{andersen1982cox, murphy2000profile}. More specifically, Assumptions \ref{assup1} (1), (3), and (4) are standard for the asymptotic theory of Cox models and are identical to assumptions imposed in \cite{fan2002variable}. Assumption \ref{assup1} (2) is similar to the Condition (C2) in \cite{sun2014network}. Moreover, the boundedness condition in Assumption \ref{assup1} (3) can be easily satisfied for bounded covariates $\bm x$ as imposed by Assumption \ref{assup1} (2). 

We also make the following assumptions for the true predictor graph $G$, which represents the underlying correlated structure among all predictors.
\begin{assumption}[Assumptions for the predictor graph $G$]\label{assup2} We impose the following assumptions for the predictor graph $G$:
\begin{enumerate} 
\item The neighborhood $\mathcal N_k \subseteq J_0$, $\forall k \in J_0$.
\item  There exists a neighborhood $\mathcal B\subset \mathbb{R}^{p}$ of $\bbeta_0$ and $\kappa > 0$ such that
\[
\begin{aligned}
&\inf_{\substack{\bbeta\in\mathcal B\\ \bm\xi \in \mathbb R^p \backslash \{0\}\\|J|\leq s_0}} 
\inf_{\substack{(V^{(1)},V^{(2)},\ldots,V^{(p)}) \in \mathcal{U}(\bm\xi) \\ \sum\limits_{k\notin J}\tau_k \Vert V^{(k)} \Vert_2 \leq 3\sum\limits_{k\in J}\tau_k \Vert V^{(k)} \Vert_2 }}
\!\!\frac12 \frac{  (\sum_{k=1}^p V^{(k)})^{\top} I(\bbeta) (\sum_{k=1}^p V^{(k)}) }{(\sum_{k\in J} \tau_k \Vert V^{(k)} \Vert_2)^2} 
\geq \kappa.
\end{aligned}
\]
\end{enumerate}
\end{assumption}

The Assumption \ref{assup2} (1) assumes that the predicted graph $G$ is {\it consistent} with the true parameter $\bbeta_0$, the same as the assumption (A2) in \cite{yu2016sparse}. The Assumption \ref{assup2} (2) serves a similar role as the restricted eigenvalue condition for Lasso \citep{bickel2009simultaneous} to guarantee the oracle properties of the estimate. Intuitively, Assumption \ref{assup2} (2) requires that nonzero effects are strong enough to enable reliable estimation and to distinguish between nonzero and zero effects. 
Compared with the assumption (A3) for the data matrix in \cite{yu2016sparse}, here the assumption is for the Fisher information matrix due to a different loss function, $-l(\bbeta)$, considered here.


Under the assumptions above, we present the finite-sample recovery error for the maximum regularized likelihood estimate $\widehat\bbeta$, as summarized in Theorem~\ref{thm:oracle}. 

\begin{theorem}[Finite Sample Bounds]\label{thm:oracle}
Under the Assumptions \ref{assup1} and \ref{assup2}, let $\tau_{\min} = \min_{1\leq i\leq p}\tau_i$. For the optimal solution $\widehat\bbeta$ of problem \eqref{eqn:opt_graph}, there exist constants $D,D',K,K'$ such that with probability at least $1-pD e^{-Kn\lambda^2 \tau_{\min}^2/p} - p^2D'e^{-K'n \tau_{\min}^4\kappa^2/p^2}$, we have
\[
\Vert \widehat\bbeta - \bbeta_0\Vert_{2}  \leq \frac{12\lambda}{\kappa\tau_{\min}}.
\]
\end{theorem}
Theorem~\ref{thm:oracle} shows that the recovery error $\Vert \widehat\bbeta - \bbeta_0\Vert_{2}$ may be large when the smallest restricted eigenvalue $\kappa$ as imposed in Assumption \ref{assup2} (2) is close to zero, and the recovery error tends to be small when the regularizer parameter $\lambda$ is small.
\textcolor{black}{The result in Theorem~\ref{thm:oracle} is also consistent with the results for the linear regression model in \cite{yu2016sparse}. The main difference from \cite{yu2016sparse} is that the parameter $\kappa$ here is inherently determined by the Cox model itself, while in \cite{yu2016sparse}, a similar parameter appears from the restricted eigenvalue condition for the linear model.} 


We also derive the asymptotic normality property of the maximum regularized likelihood estimate under the case that the dimension $p$ of the covariate is fixed. We adopt the convention that $\bbeta_{J_0}$, $\bbeta_{J_0^c}$ denote the subvectors of $\bbeta$ consisting of entries with index belonging to the set $J_0$ and $J_0^c$, respectively, and $I_{J_0}(\bbeta_0)$ denotes the square matrix with rows and columns belong to the set $J_0$.
\begin{theorem}[Asymptotic Normality]\label{thm:asynormal}
When dimension $p$ is fixed, assume $\sqrt{n}\lambda \rightarrow 0$ and $\tau_j = O(1)$, $\forall j \in J_0$, $n^{(\gamma + 1)/2}\lambda \rightarrow \infty$, and $\lim\inf_{n\rightarrow\infty}n^{-\gamma/2}\tau_j > 0$ for each $j\in J_0^c$, under Assumptions \ref{assup2} (1), we have as $n\rightarrow \infty$,  
\[
\sqrt{n}(\widehat\bbeta_{J_0} - \bbeta_{0,J_0}) \overset{d}{\rightarrow} N(0, I_{J_0}(\bbeta_0)^{-1}), \quad \widehat\bbeta_{J_0^c} \overset{d}{\rightarrow} 0.
\]
\end{theorem}
The result in Theorem~\ref{thm:asynormal} indicates that the proposed estimate is asymptotically consistent when the dimension $p$ is fixed and $n\to\infty$, in the sense that the support of the true parameter can be recovered. It can also provide approximate confidence intervals for the estimate when the sample size is moderately large, based on the asymptotic normal distribution. It is worthwhile remarking that while the true Fisher information matrix $I(\bbeta_0)$ could be unknown in practice, we may estimate it using the empirical Fisher information and using the coefficients estimate $\hat\bbeta$. 



\section{Simulation Study}\label{sec:simulation}

To evaluate the performance of the graph regularizer for the Cox model, it is compared with some existing regularizers for the Cox model, including the classical lasso \citep{tibshirani1996regression,tibshirani1997lasso}, ridge regression \citep{hoerl1970ridge}, elastic net \citep{zou2005regularization, wu2012elastic}, SCAD \citep{fan2001variable, fan2002variable}, and adaptive lasso (Alasso) \citep{zou2006adaptive, zhang2007adaptive}, see Section~\ref{sec:regularizers} for details. 
The regularized survival models are evaluated on the following performance measures:
\begin{enumerate}[(i)]
\item $\ell_{2}$ and $\ell_1$ errors of the estimated coefficients: $\lVert \widehat{\bbeta} - \bbeta_{0} \rVert_{2}$ and $\lVert \widehat{\bbeta} - \bbeta_{0} \rVert_{1}$;
\item Harrell's concordance index (c-index) \citep{harrell1996multivariable}. The c-index is a commonly used metric for evaluating survival prediction models. It measures the ability of the model to correctly predict the ranking of the survival time given a pair of new observations and is equivalent to the Area Under Curve (AUC) \citep{huang2005using}. A c-index of 0.5 is equivalent to random guessing, and 1 is a perfect prediction. In recent survival applications, a c-index between 0.6 and 0.7 is often considered satisfactory \citep{laimighofer2016unbiased}.
\end{enumerate}

Three types of predictor graph topologies are tested in the simulation study: (1) the sparse graph, (2) the ring graph, and (3) the graph with communities. Figure~\ref{fig: graph-topologies} illustrates the corresponding graph typologies. 

\begin{figure}[!ht]
\begin{center}
\includegraphics[width=0.6\textwidth]{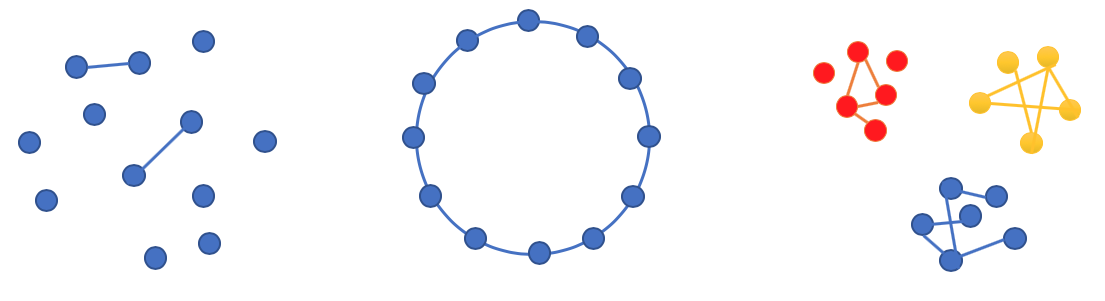}
\end{center}
\caption{Illustration of three predictor graph typologies used in the simulation. From left to right: the sparse graph, the ring graph, and the graph with three communities.}
\label{fig: graph-topologies}
\end{figure}

In the following, we show that the proposed graph regularizer has the most promising performance among the regularizers for the Cox model that was tested in the simulation study. \textcolor{black}{In the proposed estimation scheme, the tuning parameter is the regularization constant $\lambda$. In practice, this constant is chosen by cross-validation.
}



\subsection{Sparse Graph}

Consider a sparse Erd\H{o}s-R\'enyi predictor graph with a small edge formation probability $\rho_{0}$. Assume Gaussian distribution for predictors:  $(X_{1}, \ldots, X_{p})^{\top} \sim N(0, \Omega^{-1})$, where $\Omega$ is an inverse covariance matrix whose off-diagonal entries equal $0.5$ with probability $\rho_0$ and $0$ with probability $1-\rho_0$. In practice, we compute $\Lambda = \Omega^{-1}$ using the {\it nearPD} transformation in the \texttt{R} package \texttt{matrix} \citep{matrix} to ensure that $\Lambda$ is positive definite. As inspired by Remark \ref{rem1}, let the true parameters be $\bbeta_{0} = \Omega \Lambda_{xy}$, where $\Lambda_{xy} = (c_{1}, c_{2}, \dots, c_{p})^{\top}$. Let $c_{i} = 10$ for the top 4 predictors with maximum edges, and $c_{i} = 0$ otherwise. 
The survival time is simulated using the \texttt{R} package \texttt{coxed} \citep{kropko2019coxed} with a censor rate of 0.3. The training size is 100, and the testing size is 400. The hyper-parameters in each model are tuned by cross-validation using the training data.

The experiment is repeated 50 times, and the results (mean and standard deviation) of the models are shown in Table \ref{tab: sparse0.1p10} for a small covariate dimension $p=10$ and $\rho_0=0.1$; in Table \ref{tab: sparse0.01}, \ref{tab: sparse0.05}, and \ref{tab: sparse0.1} for a larger dimension $p=100$ and $\rho_{0} = 0.01, 0.05, 0.1$, respectively. We note that 50 iterations, based on empirical observations, are both computationally efficient and sufficiently demonstrate the relative performance of the methods with low variability. We see that under small dimension $p=10$, the proposed method based on graph regularize results in the best performance or very close to the best. In the setting of $p=100$, the graph structure tends to be more obvious and important for inference, and the proposed method based on graph regularizer results in lower $\ell_{2}$ and $\ell_1$ errors and higher c-index, compared to other regularizers and baseline models, regardless of the edge formation probability (see Table~\ref{tab: sparse0.01}, \ref{tab: sparse0.05}, \ref{tab: sparse0.1}). Here Inf value in the table means that the error magnitude is significantly larger than others (usually much larger than $10^4$).
%
As the edge formation probability $p_{0}$ increases, the performance of all models gets worse, but the graph-based regularization consistently results in better prediction than other models.  



\begin{table}[!ht]
\caption{Results on the Erd\H{o}s-R\'enyi predictor graph, $p=10$, $\rho_{0} = 0.1.$}
\centering
\begin{tabular}{ cccc } 
\specialrule{.08em}{0em}{0em}
 Model & $\ell_{2}$ norm & $\ell_{1}$ norm & c-index 
 \\ \hline
 \textbf{Graph regularizer} & 33.97 (0.49)  & 79.19 (0.95)  & {\bf 0.83} (0.03) \\
 Lasso& 34.26 (0.33)  & 79.67 (0.74)  & 0.83 (0.03) \\
 Ridge regression& 34.99 (0.1)  & 81.39 (0.29)  & 0.74 (0.05) \\
 Elastic net& 34.26 (0.32)  & 79.67 (0.72)  & 0.83 (0.03) \\
 SCAD& {\bf 33.74} (0.79)  & {\bf 78.61} (1.75)  & {\bf 0.83} (0.03) \\
 Alasso& 35.39 (0.01)  & 81.98 (0.02)  & 0.61 (0.03) \\
 Cox without regularization& 33.78 (0.42)  & 78.68 (0.95)  & 0.82 (0.03) \\
\specialrule{.08em}{0em}{0em}
\end{tabular}
\label{tab: sparse0.1p10}
\end{table}






 

\begin{table}[!ht]
\caption{Results on the Erd\H{o}s-R\'enyi predictor graph, $p=100$, $\rho_{0} = 0.01.$}
\centering
\begin{tabular}{ cccc } 
\specialrule{.08em}{0em}{0em}
 Model & $\ell_{2}$ norm & $\ell_{1}$ norm & c-index 
 \\ \hline
 \textbf{Graph regularizer} & {\bf 29.95  }(0.38) &{\bf 104.19 }(0.78) & {\bf 0.72} (0.04) 
 \\ 
Lasso & 30.30  (0.11) & 104.94 (0.19) &0.66 (0.04)\\ 
Ridge regression& 30.38 (0.03) & 105.45 (0.36) &0.60 (0.03)\\ 
Elastic net & 30.31 (0.08) & 105.01 (0.16) &0.66 (0.04)\\ 
SCAD & 30.34 (0.08) & 104.90 (0.18) & 0.66  (0.04) \\ 
Alasso & 30.40 (0.01) & 104.98 (0.02) &0.63  (0.05)\\ 
Cox without regularization& Inf(-) & Inf(-) & 0.54 (0.04)\\ 
\specialrule{.08em}{0em}{0em}
\end{tabular}
\label{tab: sparse0.01}
\end{table}

\begin{table}[!ht]
\caption{Results on the Erd\H{o}s-R\'enyi predictor graph, $p=100$, $\rho_{0} = 0.05.$}
\centering
\begin{tabular}{ cccc } 
\specialrule{.08em}{0em}{0em}
 Model & $\ell_{2}$ norm & $\ell_{1}$ norm & c-index 
 \\ \hline
 \textbf{Graph regularizer} & {\bf 41.95  }(0.48) &{\bf 238.25 }(1.66) & {\bf 0.70} (0.03) \\ 
Lasso & 42.27  (0.16) & 239.70 (0.37) &0.68 (0.03)\\ 
Ridge regression& 42.36 (0.06) & 240.25 (0.20) &0.66 (0.03)\\ 
Elastic net &  42.27  (0.15) & 239.78   (0.33) &0.67 (0.03)\\ 
SCAD & 42.37  (0.08) & 239.84 (0.26) & 0.68  (0.04) \\ 
Alasso & 42.41 (0.02) & 239.95 (0.05) &0.62  (0.06)\\  
Cox without regularization& Inf(-) & Inf(-) & 0.56 (0.05)\\ 
\specialrule{.08em}{0em}{0em}
\end{tabular}
\label{tab: sparse0.05}
\end{table}

 
 
 
 
 



\begin{table}[!ht]
\caption{Results on the Erd\H{o}s-R\'enyi predictor graph, $p=100$, $\rho_{0} = 0.1.$}
\centering
\begin{tabular}{ cccc } 
\specialrule{.08em}{0em}{0em}
 Model & $\ell_{2}$ norm & $\ell_{1}$ norm & c-index 
 \\ \hline
 \textbf{Graph regularizer} & {\bf 59.99} (0.55) &{\bf 372.66  }(1.85) & {\bf 0.70} (0.04) 
\\ 
Lasso & 60.46 (0.19) & 374.59 (0.59) &0.68 (0.03)\\ 
Ridge regression& 60.57 (0.05) & 375.12 (0.14) &0.66 (0.03)\\ 
Elastic net &  60.47  (0.15) & 374.66  (0.40) &0.68 (0.03)\\ 
SCAD & 60.57  (0.06) & 374.87 (0.19) & 0.67  (0.04) \\ 
Alasso & 60.60 (0.03) & 374.94   (0.08) &0.61  (0.06)\\  
Cox without regularization& Inf(-) & Inf(-) & 0.54 (0.05)\\ 
\specialrule{.08em}{0em}{0em}
\end{tabular}
\label{tab: sparse0.1}
\end{table}

Furthermore, we also conducted an ablation study for varying censoring mechanisms, including the covariate-independent censoring with varying censoring rates and the covariate-dependent censoring as shown in Appendix \ref{app}. It is observed that the results are robust to different censoring mechanisms, and the proposed method consistently outperforms other baseline methods. In addition to the estimation error and c-index, we also compare the number of non-zero coefficients selected by different penalty terms in order to demonstrate the interpretability of different methods. In practice, especially for problems with a large number of covariates, it is preferred to have a method that can select fewer variables while maintaining a similar level of accuracy since appropriate, but fewer variables typically imply better interpretability and variable selection. As shown in Table \ref{tab: sparse0.1-variable-num}, in this specific setting, the SCAD method tends to underestimate the number of non-zero coefficients, leading to an overly sparse solution, while graph regularizer selects a smaller number of coefficients as compared with all other methods and achieves a relatively high prediction accuracy.
\begin{table}[!ht]
\caption{Number of non-zero coefficients under the Erdős–Rényi predictor graph, where only coefficients with an absolute value greater than 0.1 are treated as non-zero to exclude negligible values. Averaged over 50 times.}
\centering
\begin{tabular}{ lccccccc } 
\specialrule{.08em}{0em}{0em}
\textbf{Model} & \textbf{Graph} & \textbf{Lasso} & \textbf{Ridge} & \textbf{Elastic net} & \textbf{SCAD} & \textbf{Alasso} & \textbf{Cox} \\ 
\hline
\textbf{$p=10$}  & 5.40  & 8.10  & 9.54  & 8.54  & 5.56  & 6.36  & 9.70 \\
\textbf{$p=100$} & 6.60  & 8.26  & 28.28 & 11.66 & 2.14  & 7.36  & 99.82 \\
\specialrule{.08em}{0em}{0em}
\end{tabular}
\label{tab: sparse0.1-variable-num}
\end{table}

\subsection{Ring Graph}

The second experiment we consider is on a ring predictor graph where the variables are nodes on the ring, and each node is connected to its immediate two neighbors, as shown in the middle of Figure\,\ref{fig: graph-topologies}. Let $(X_{1}, X_{2}, \dots, X_{p})^{\top} \sim N(0, \Omega^{-1})$, where $p = 100$. Let $\Omega = B + \delta I_p$, where $B_{ij} = 0.5$ for $|i-j| <2$ and $B_{ii} = 0$, $I_p$ is the identity matrix, and $\delta$ is chosen to make the condition number of $\Omega$ equal to $p$. Let the true parameter $\bbeta_{0} = \Omega \mathbf{1}$, where $\mathbf{1} \in \mathbb{R}^{p\times1}$ is a vector with all one entries. 

From the results in Table~\ref{tab: ring0p10} and Table~\ref{tab: ring0}, we observe that the graph-based regularizer has the best performance on the $\ell_{2}$ and $\ell_{1}$ errors, and the c-index when the predictor graph is a ring graph, for both $p=10$ and $p=100$ cases. 
The competing models have close performance with the graph regularizer since the relations among the variables in the ring graph are relatively simple. 

 
 
 
 
 



\begin{table}[!ht]
\caption{Performance on the ring predictor graph with $p=10$.}
\centering
\begin{tabular}{ cccc } 
\specialrule{.08em}{0em}{0em}
 Model & $\ell_{2}$ norm & $\ell_{1}$ norm & c-index 
 \\ \hline
 \textbf{Graph regularizer} & {\bf 40.99} (0.52)  & {\bf 92.63} (1.09)  & {\bf 0.85
 }(0.02)\\
 Lasso& 41.52 (0.33)  & 93.42 (0.77)  & 0.85 (0.02)\\
 Ridge regression& 41.52 (0.32)  & 93.52 (0.73)  & 0.84 (0.02)\\
 Elastic net& 41.53 (0.34)  & 93.5 (0.77)  & 0.85 (0.02)\\
 SCAD& 41.23 (0.83)  & 92.77 (1.86)  & 0.85 (0.02)\\
 Alasso& 42.6 (0.04)  & 95.67 (0.1)  & 0.84 (0.02)\\
 Cox without regularization& Inf(-)  & Inf(-)  & 0.83 (0.02)\\
\specialrule{.08em}{0em}{0em}
\end{tabular}
\label{tab: ring0p10}
\end{table}

\begin{table}[!ht]
\caption{Performance on the ring predictor graph with $p=100$.}
\centering
\begin{tabular}{ cccc } 
\specialrule{.08em}{0em}{0em}
 Model & $\ell_{2}$ norm & $\ell_{1}$ norm & c-index 
 \\ \hline
 \textbf{Graph regularizer} & {\bf 41.81} (0.36) &{\bf 94.85  }(0.69) & {\bf 0.79} (0.03) 
 \\       
 Lasso & 42.36 (0.27) & 95.74 (0.36) &0.74 (0.03)\\ 
Ridge regression& 42.73 (0.03) & 96.29 (0.38) &0.65 (0.03)\\ 
Elastic net &   42.49  (0.21) & 96.10  (0.28) &0.71 (0.03)\\ 
SCAD & 42.70   (0.06) & 95.97 (0.10) & 0.77  (0.04) \\ 
Alasso & 42.72 (0.02) & 95.98  (0.03) &0.69  (0.09)\\ 
Cox without regularization& Inf(-) & Inf(-) & 0.53 (0.04)\\ 
\specialrule{.08em}{0em}{0em}
\end{tabular}
\label{tab: ring0}
\end{table}



\subsection{Graph with Communities}

Suppose some of the predictors have community identities, and for predictors in the same community, an edge forms with probability $\rho_{\rm{in}}$; for predictors in different communities or those not in any communities, let the probability of edge formation among them be $\rho_{\rm{out}}$. Let $\rho_{\rm{in}} = 0.5, 0.7, 0.9$, and $\rho_{\rm{out}} = 0.01$. For covariate dimension $p = 100$ ($p=10$)
, we assume there exist three communities, each with size 30 (3), respectively.

The performance comparison is shown in Table~\ref{tab: commu0.5p10}, \ref{tab: commu0.9p10} for $p=10$, and 
in Table~\ref{tab: commu0.5}, \ref{tab: commu0.7}, \ref{tab: commu0.9} for $p=100$, under various $\rho_{\rm{in}}$ values, respectively. 
We observe that the graph-based regularization has the best $\ell_{2}$ norm and c-index regardless of the value of $\rho_{\rm{in}}$ in most cases, especially when the dimension is moderately large $p=100$. As $\rho_{\rm{in}}$ increases, the communities become denser, and the relations among the variables become more complex. Therefore, it becomes more difficult for the models to acquire accurate estimation and prediction. 

\begin{table}[!ht]
\caption{Results on the 3-community predictor graph, $p=10$, $\rho_{\rm{in}}  = 0.5.$}
\centering
\begin{tabular}{ cccc } 
\specialrule{.08em}{0em}{0em}
 Model & $\ell_{2}$ norm & $\ell_{1}$ norm & c-index 
 \\ \hline
 \textbf{Graph regularizer}  &  {\bf 12.24} (0.43)  & {\bf 31.37} (1.19)  & 0.86 (0.02) \\
 Lasso& 12.54 (0.21)  & 31.79 (0.72)  & 0.87 (0.02) \\
 Ridge regression& 12.51 (0.2)  & 31.69 (0.74)  & 0.87 (0.02) \\
 Elastic net& 12.59 (0.24)  & 31.83 (0.7)  & 0.87 (0.02) \\
 SCAD& 12.44 (0.31)  & 31.41 (1.06)  & 0.88 (0.02) \\
 Alasso& 13.11 (0.03)  & 33.72 (0.11)  & 0.87 (0.02) \\
 Cox without regularization& 14.14 (2.87)  & 35.35 (6.9)  & 0.85 (0.02) \\  
\specialrule{.08em}{0em}{0em}
\end{tabular}
\label{tab: commu0.5p10}
\end{table}


\begin{table}[!ht]
\caption{Results on the 3-community predictor graph, $p=10$, $\rho_{\rm{in}}  = 0.9.$}
\centering
\begin{tabular}{ cccc } 
\specialrule{.08em}{0em}{0em}
 Model & $\ell_{2}$ norm & $\ell_{1}$ norm & c-index 
 \\ \hline
 \textbf{Graph regularizer}  &  {\bf 19.87} (0.3)  & 46.77 (0.97)  & 0.86 (0.02) \\
  Lasso & 20.01 (0.26)  & {\bf 46.72} (0.69)  & 0.87 (0.02)\\  
 Ridge regression& 19.93 (0.29)  & 46.73 (0.67)  & 0.87 (0.02)\\  
 Elastic net& 20.29 (1.13)  & 48.07 (4.58)  & 0.87 (0.02)\\  
 SCAD& 20.66 (3.78)  & 48.63 (10.07)  & 0.87 (0.02)\\  
 Alasso & 20.77 (0.05)  & 48.61 (0.12)  & 0.86 (0.02)\\  
 Cox without regularization & 25.97 (4.2)  & 65.24 (12.11)  & 0.84 (0.03) \\  
\specialrule{.08em}{0em}{0em}
\end{tabular}
\label{tab: commu0.9p10}
\end{table}

\begin{table}[!t]
\caption{Results on the 3-community predictor graph, $p=100$, $\rho_{\rm{in}} = 0.5.$}
\centering
\begin{tabular}{ cccc } 
\specialrule{.08em}{0em}{0em}
 Model & $\ell_{2}$ norm & $\ell_{1}$ norm & c-index 
 \\ \hline
 \textbf{Graph regularizer} & {\bf 59.92} (0.78) &{\bf 432.26 }(3.01) & {\bf 0.69} (0.04) 
 \\                              
 Lasso & 60.57 (0.08) & 434.89 (0.19) &0.66 (0.03) \\ 
 Ridge regression& 60.60 (0.03) & 435.07 (0.11) & 0.64 (0.03) \\ 
 Elastic net &   60.58  (0.06) & 434.93  (0.12) &0.65 (0.04) \\ 
SCAD & 60.60   (0.03) & 434.96 (0.08) & 0.63  (0.04) \\ 
 Alasso & 60.62 (0.01) & 434.99  (0.02) &0.57  (0.05) \\ 
Cox without regularization& Inf(-) & Inf (-) & 0.54  (0.05)\\ 
\specialrule{.08em}{0em}{0em}
\end{tabular}
\label{tab: commu0.5}
\end{table}

\begin{table}[!t]
\caption{Results on the 3-community predictor graph, $p=100$, $\rho_{\rm{in}}  = 0.7.$}
\label{tab: commu0.7}
\centering
\begin{tabular}{ cccc } 
\specialrule{.08em}{0em}{0em}
 Model & $\ell_{2}$ norm & $\ell_{1}$ norm & c-index 
 \\ \hline
 \textbf{Graph regularizer} & {\bf 77.66} (0.49) &{\bf 525.16 }(2.98) & {\bf 0.69} (0.04) \\ 
 Lasso & 78.40 (0.05) & 529.97 (0.22) &0.65 (0.04) \\ 
 Ridge regression& 78.40 (0.03) & 530.09 (0.11) & 0.62 (0.04) \\ 
 Elastic net &   78.40  (0.04) & 529.96  (0.13) &0.64 (0.04) \\ 
SCAD & 78.41   (0.02) & 529.98 (0.10) & 0.61  (0.04) \\ 
 Alasso & 78.42 (0.01) & 530.00  (0.03) &0.55  (0.04) \\ 
Cox without regularization& Inf(-) & Inf (-) & 0.54  (0.05)\\ 
\specialrule{.08em}{0em}{0em}
\end{tabular}
\end{table}

 
 
 
 
 



\begin{table}[!ht]
\caption{Results on the 3-community predictor graph, $p=100$, $\rho_{\rm{in}}  = 0.9.$}
\centering
\begin{tabular}{ cccc } 
\specialrule{.08em}{0em}{0em}
 Model & $\ell_{2}$ norm & $\ell_{1}$ norm & c-index 
 \\ \hline
 \textbf{Graph regularizer} 
 & {\bf 89.80} (0.67) &{\bf 625.80 }(3.59) & {\bf 0.68} (0.03)  \\ 
 Lasso & 90.55 (0.01) & 630.01 (0.07) &0.57 (0.03) \\ 
 Ridge regression& 90.55 (0.01) & 630.08 (0.12) & 0.54 (0.02)\\ 
 Elastic net &   90.55  (0.01) & 630.02  (0.10) &0.56 (0.03) \\ 
SCAD & 90.55   (0.01) & 630.01 (0.09) & 0.55  (0.02) \\ 
 Alasso & 90.55 (0.00) & 630.00 (0.01) &0.53  (0.03) \\ 
Cox without regularization& Inf(-) & Inf (-) & 0.53  (0.04)\\ 
\specialrule{.08em}{0em}{0em}
\end{tabular}
\label{tab: commu0.9}
\end{table}

\section{Real Data Examples}\label{sec:data}

We apply the graph-based regularizer on two real datasets: the pediatric kidney transplant data and the primary biliary cirrhosis sequential (pbcseq) data, and compare performance with other commonly used regularization methods. 

\subsection{Pediatric Kidney Transplant Data}\label{sec:pediatric}

Predicting the survival time for transplant recipients is a crucial task for the transplant community. Accurate post-transplant survival prediction can provide helpful information for organ allocation decisions. A challenge with transplant survival prediction is that the data recorded for each transplant case are usually high-dimensional and highly correlated. Therefore, building a predictor graph and using the graph regularizer can be especially beneficial for solving the variable selection problems when building survival prediction models. 

We use the proposed graph regularized Cox model to predict the survival time of pediatric kidney transplant recipients. The dataset we use contains 19,236 pediatric kidney transplant cases in the U.S. from 1987 to 2014, and for each transplant case, there are 487 predictors. The dataset is provided by UNOS (United Network for Organ Sharing).  
The donor type, i.e., living versus deceased, can significantly impact the post-transplant survival \citep{gjertson2001determinants, terasaki1995high}; hence,  we separate the observations into two data sets and develop corresponding regularized Cox models for observations with living and deceased donors, respectively. 
We construct the predictor graph by connecting numerical predictors with high inverse covariance and connecting the paired categorical variables between the transplant recipient and the donor. For example, we connect the variables ``HBV: positive'' (recipient HBV infection status: positive) and ``HBV\_DON: positive'' (donor HBV infection status: positive). This connection is based on the assumption that being in a similar condition as the donor is beneficial for the organ recipient's post-transplant survivability. 


\begin{table}[!b]
\caption{Performance comparison on pediatric kidney transplant data.}
\centering
\begin{tabular}{ ccc } 
\specialrule{.08em}{0em}{0em}
 Model & Living donors c-index  & Deceased donors c-index\\ 
 \hline
 \textbf{Graph regularizer} &  {\bf 0.59}(0.045) &{\bf 0.58}(0.055)\\ 
 Lasso &0.57(0.039)&0.57(0.055)\\
 Ridge regression &0.49(0.039)&0.56(0.060)\\
 Elastic net &0.57(0.038)&0.58(0.045)\\ 
 SCAD &0.57(0.028)&0.57(0.056)\\
 Alasso & 0.57(0.040)&0.57(0.049)\\ 
 Group lasso & 0.57(0.051)&0.57(0.038)\\ 
 Cox without regularization &0.49(0.039)&0.55(0.058)\\
\specialrule{.08em}{0em}{0em}
\end{tabular}
\label{tab: transplantResult}
\end{table}

\begin{figure}[!ht]
\begin{center}
\begin{tabular}{c}
\includegraphics[width=0.6\textwidth]{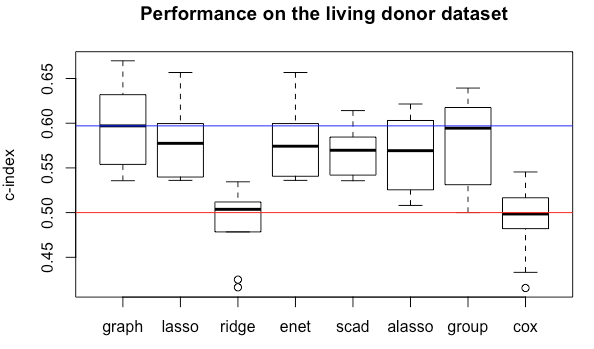} \\ \includegraphics[width=0.6\textwidth]{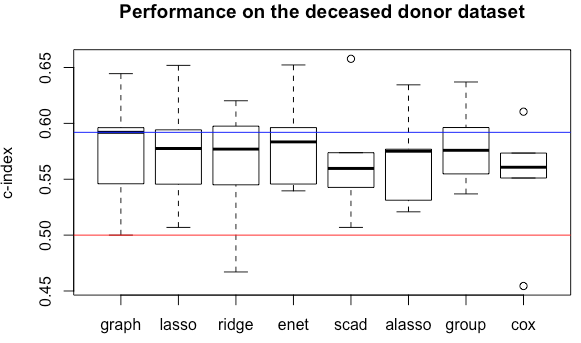}
\end{tabular}
\end{center}
\caption{The boxplot of the model c-indices on the living (upper) and deceased (bottom) donor datasets. The blue line indicates the median c-index of the proposed method; the red line indicates where the c-index equals 0.5 (random guessing).}
\label{fig: livingResult}
\end{figure}



The performance of the graph regularized Cox model is compared with other regularizers in Table \ref{tab: transplantResult} and Figures \ref{fig: livingResult}. Since the true parameters are unknown in the real data, we only compare the c-index. The results are based on five randomized partitions of the dataset. In each trial, 80\% of the data is used for training and 20\% for testing. The training and testing datasets are randomly partitioned in every trial to enhance robustness and reduce potential biases associated with a single data split. We observe that the graph-based regularizer has the highest mean and median c-index for both donor types. The improvement of using the graph regularizer is more prominent in the living donor dataset. This result is possible due to the fact that the living donor is more often related to the recipient and is likely to have closer biological and environmental characteristics than the recipient. More variables are also recorded from the living donors than from the deceased donors in the dataset. Therefore, the living donor predictor graph we can create is more complicated than the deceased donor graph, which gives the graph-based regularizer more advantages over other methods in predicting the survival outcome for pediatric recipients of living donor kidneys. 

We would like to emphasize that, as commented in Remark \ref{rem1}, the graph is treated as a fixed input parameter in our algorithm. When the ground truth graph structure is not perfectly known, we form the graph based on the correlation graph of observed data (for continuous variables) and domain knowledge or insights for all other variables (including the interaction of continuous and discrete variables). The estimation results may vary depending on the input graph structure. Therefore, in practice, when the graph structure is highly uncertain, we may also explore the fitting results across all potential graph structures and select the optimal one using cross-validation.

Moreover, taking the living donor dataset as an example, we present the estimated coefficients of variables identified as non-zero in Table \ref{tab:coefficients}. This provides a meaningful interpretation of the fitted model, and the small number of non-zero coefficients demonstrates the proposed method's ability to fit a model with few predictors. In the survival model, a positive coefficient indicates an increased hazard, meaning the variable is associated with a higher risk of mortality and thus has a negative impact on survival. Among the five selected variables, the most influential variable ``DIAG\_KI: TUBULAR.AND.INTERSTITIAL.DISEASES'' exhibits the largest positive coefficient, suggesting that a diagnosis of kidney tubular and interstitial diseases is strongly associated with an increased risk of mortality.

\begin{table}[ht]
\centering
\caption{The estimated coefficients of selected variables under the graph regularizer.}
\begin{tabular}{|c|l|r|}
\hline
\textbf{Variable Name} & \textbf{Coefficient} \\ \hline
DIAG\_KI: TUBULAR.AND.INTERSTITIAL.DISEASES & 0.4388 \\ \hline
ETHCAT: other & -0.0029 \\ \hline
EXH\_PERIT\_ACCESS: Y & 0.0021 \\ \hline
HAPLO\_TY\_MATCH\_DON: 1 & 0.0009 \\ \hline
HCV\_DON: unknown & 0.0338 \\ \hline
\end{tabular}
\label{tab:coefficients}
\end{table}

%

\subsection{Primary Biliary Cirrhosis Sequential ({\it pbcseq}) Data}
\label{sec:data_pbcseq}

The \textit{pbcseq} data \citep{murtaugh1994primary, fleming2011counting} in the \texttt{R} package \texttt{survival} \citep{survival} is recorded by the Mayo Clinic to study the primary biliary cirrhosis (PBC) of the liver from 1974 to 1984. It contains information on 1945 patients and 17 predicting variables. After removing the missing data, the pre-processed survival dataset contains 1113 samples in total.

To create a predictor graph, we analyze the relations of the variables in the {\it pbcseq} dataset. For the numerical variables, we compute their inverse covariance (shown in Figure~\ref{fig: pbcseq}). We connect pairs of variables if their Pearson's test $p$-value is less than 0.05 \citep{kim2015ppcor}. For the categorical variables, we connect variables representing different levels under the same categorical variable. For completeness, we summarize the variable relations for the predictor graph in Table \ref{tab: pbcseqNeighbors} in the Appendix. The neighbors of a variable are those that are connected to the variable. 

\begin{figure}[!t]
\begin{center}
\includegraphics[width=0.5\textwidth]{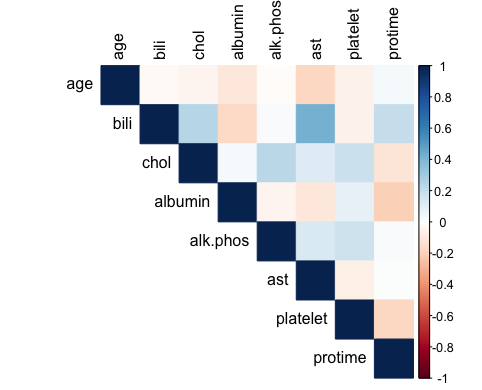}
\end{center}
\caption{Inverse covariance of the numerical variables in the {\it pbcseq} dataset.}
\label{fig: pbcseq}
\end{figure}


We compare the performance of the graph regularization to other methods using 10-fold cross-validation on the {\it pbcseq} dataset. Since this is a real data problem and the true parameters are unknown, only the c-index can be computed. We employed a similar approach to the previous kidney transplant dataset. The experimental results are based on 10 randomized partitions of the data. For each trial, 90\% of the data (1,002 samples) are used for training, and 10\% (111 samples) are used for testing/evaluation. Randomized partitioning is again performed in each trial to maintain robustness and mitigate biases from a single data split. The results are shown in Table \ref{tab: pbcseqResult} and Figure \ref{fig: pbcseqResult}, where the blue reference line in the figure is the median of the graph lasso c-index.

As shown in Table \ref{tab: pbcseqResult}, the graph-based regularizer has the highest c-index on the {\it pbcseq} dataset. The ridge regression, the elastic net, and the SCAD penalties also perform well. The boxplot shows that the graph-based regularization has the highest median c-index. The ridge regression and the elastic net have about the same median c-index as the graph regularizer. Still, their distributions of the c-index are lower than the graph lasso. 

Therefore, we can conclude that the graph-based regularization has satisfactory performance on the {\it pbcseq} dataset. However, its performance improvement is limited by the fact that the problem is not high-dimensional ($p = 17$), and the graphical structure among the variables is relatively simple. 

\begin{table}[!ht]
\caption{Performance of different penalties on {\it pbcseq} dataset.}
\centering
\begin{tabular}{ cc } 
\specialrule{.08em}{0em}{0em}
 Model & c-index \\ \hline
 
 \textbf{Graph regularizer} &  {\bf 0.88}(0.086)\\ 
 
 Lasso &0.86(0.082)\\ 
 
 Ridge regression &0.87(0.092)\\ 
 
 Elastic net &0.87(0.085)\\ 
 
 SCAD &0.87(0.079)\\ 
 
 Alasso & 0.86(0.088)\\ 
 
 Group lasso & 0.86(0.076)\\ 
 
 Cox without regularization &0.83(0.098)\\ 
\specialrule{.08em}{0em}{0em}

\end{tabular}
\label{tab: pbcseqResult}
\end{table}

\begin{figure}[!ht]
\begin{center}
\includegraphics[width=0.6\textwidth]{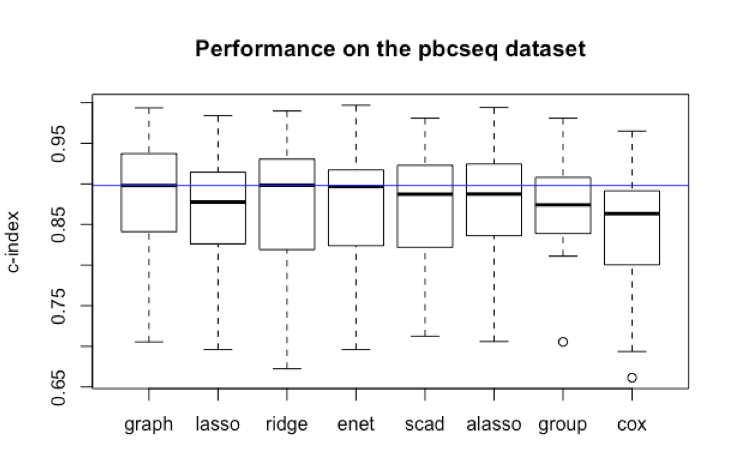}
\end{center}
\caption{The boxplot of the model c-indices on the {\it pbcseq} dataset. The blue line indicates the median c-index of the proposed method.}
\label{fig: pbcseqResult}
\end{figure}

\begin{table}[!ht]
\caption{Number of selected non-zero coefficients for pbcseq dataset, thresholding by 0.1.}
\centering
\begin{tabular}{ c|c|c|c|c|c|c|c} 
\specialrule{.08em}{0em}{0em}
\textbf{Graph} & Group Lasso & Lasso & Ridge &  Elastic net & SCAD & Alasso &
 Cox \\
 \hline
7 & 11 & 12 & 13 & 10 & 3 & 7 & 13\\
\specialrule{.08em}{0em}{0em}
\end{tabular}
\label{tab: pbcseq-variable-num}
\end{table}
We also compare the number of non-zero coefficients selected by different penalty terms as shown in Table \ref{tab: pbcseq-variable-num}. We note that in this real data example, the SCAD regularizer tends to underestimate the number of non-zero coefficients, leading to an overly sparse solution, while the graph regularizer selects a smaller number of coefficients as compared with all other methods and achieves a good prediction performance.

\section{Conclusion and Discussions}\label{sec:conclusion}

In this paper, we have studied the variable selection problem in survival analysis and developed a new graph-based regularized maximum partial-likelihood approach based on the Cox proportional hazard model. The graph-based regularization enables us to capture the complex graph-structured correlation between variables and, thus, more accurate variable selection compared to existing methods. We demonstrate the improved performance of our method compared with the state-of-the-art on simulated and real datasets. Although the problem is motivated by the organ transplantation application, the proposed method is very general and applicable to other applications where variable dependence can be captured through a graph.

There are several possible directions for future work. First, the choice of the regularization parameters $\lambda$ and $\bm\tau$ is critical to variable selection since there is typically a trade-off between sparsity and accuracy. Therefore, it would be useful to study the cross-validation for the Cox model under graph-based regularization, especially when we have censored data. The corresponding theoretical development is worth further investigation. Second, the graph structure used in this paper is only for predicting variables. We can also consider possible networks between donors and recipients for specific applications, such as the organ transplantation problem. Moreover, the graph can be further generalized to weighted graphs where weights may indicate the probability of success between each pair or the correlation of each pair of predicting variables.

\section*{Acknowledgments}
The authors are grateful to Professor Guan Yu for the helpful discussions.

\section*{Declarations}
{\bf Funding} The work of Xi He, Liyan Xie, and Yao Xie was partially supported by an NSF CAREER CCF-1650913, NSF DMS-2134037, CMMI-2015787, CMMI-2112533, DMS-1938106, DMS-1830210, and the Coca-Cola Foundation.

\noindent{\bf Competing interests} The authors have no conflict of interest to declare that are relevant to the content of this article.



\begin{appendices}

\section{Additional Implementation Details and Numerical Results}\label{app}


\noindent {\bf Additional Details on the Numerical Examples:}  Table\,\ref{tab: pbcseqNeighbors} gives a detailed description of the connected variable in the \textit{pbcseq} dataset.

\begin{table}[ht!]
\caption{Connected variables and their neighbors in the \textit{pbcseq} dataset used in Section~\ref{sec:data_pbcseq}.}
\centering
\begin{tabular}{ll } 
\specialrule{.08em}{0em}{0em}
 Variable & Neighbors \\  \hline
age& albumin, ast\\ 
bili& chol, albumin, ast, plateleet, protime\\ 
chol& alk.phos, ast, platelet, protime\\ 
albumin& ast, platelet, protime \\ 
alk.phos& ast, platelet\\ 
ast& platelet\\ 
platelet& protime\\ 
edema0.5& edema1  \\ 
stage2& stage3, stage4 \\
\specialrule{.08em}{0em}{0em}
\end{tabular}
\label{tab: pbcseqNeighbors}
\end{table}

\noindent {\bf Additional Discussions on the Solution Algorithm:} To derive the corresponding FISTA algorithm, we first study the quadratic approximation of $-\frac1n\ell(\bbeta)+\Vert \bbeta \Vert_{G,\bm\tau}$ at a given point $\bbeta'$,
\[
\begin{aligned}
Q_L(\bbeta,\bbeta')  =& -\frac1n\ell(\bbeta') + \langle \bbeta-\bbeta', -\frac1n\nabla \ell(\bbeta')  \rangle +\frac{L}{2n}\Vert \bbeta-\bbeta' \Vert_2^2 + \Vert \bbeta \Vert_{G,\bm\tau}.
\end{aligned}
\]
The quadratic approximation $Q_L(\bbeta,\bbeta')$ admits a unique minimizer
\[
p_L(\bbeta')= \arg\min_{\bbeta} \Bigg\{ \Vert \bbeta \Vert_{G,\bm\tau} + \frac{L}{2n}\left\Vert \bbeta - \bbeta' - \frac{\nabla \ell(\bbeta')}{L} \right \Vert^2 \Bigg\}.
\]
And the corresponding FISTA algorithm can be summarized in Algorithm \ref{alg:fista}.

\begin{algorithm}[ht!]
\caption{FISTA for solving $\bbeta$}\label{alg:fista}
\label{alg}
\SetAlgoLined
{\bf Input:} $L$ - the Lipschitz constant of $\nabla \ell(\bbeta)$\; 
{\bf Step 0:} Set $\bbeta_1=\bm z_1$ (pre-set initial value), $t_1 = 1$\;
\For {$k = 1,2,\cdots$}{
$\bm z_k = p_L(\bbeta_k)$;\\
$t_{k+1} = \frac{1+\sqrt{1+4t_k^2}}{2}$;\\
$\bbeta_{k+1} = \bm z_k + \frac{t_k-1}{t_{k+1}}(\bm z_k - \bm z_{k-1})$.\\
}
\end{algorithm}

\noindent {\bf Ablation Study: Varying Censoring Mechanisms.}

We conduct ablation studies under varying censoring rates and mechanisms to demonstrate the robust performance of the proposed method. For example, the results for censoring rate = 50\%, 40\%, 20\% for the sparse graph with $p=100$ are provided in Table \ref{tab: sparse0.1-censor05}, Table \ref{tab: sparse0.1-censor04}, Table \ref{tab: sparse0.1-censor02}, respectively. Furthermore, we also tried two scenarios where the censoring rate depends on the covariates:
\begin{itemize}
    \item Default setting in R. Table \ref{tab: sparse0.1-censor-dep} presents the results under the covariate-dependent censoring as the default configuration in R.  Specifically, by setting ``censor.cond'' to be TRUE within the \texttt{sim.survdata} function for simulating the survival data, then censoring depends on the covariates as follows: new coefficients are drawn from normal distributions with mean 0 and standard deviation of 0.1, and these new coefficients are used to create a new linear predictor using the $X$ matrix. The observations with the largest censoring percentage of the linear predictors are designated as right-censored.
    \item Censoring rate only depends on the top four covariates, with results presented in Table \ref{tab: sparse0.1-censor-dep1}. Specifically, we let the censoring rate equal to $\frac{1}{1+e^{0.85 + 0.1 (x_1 + x_2 + x_3 + x_4)}}$ where $x_1,\ldots,x_4$ are the top four covariates (corresponding to the four variables with the largest absolute value of regression coefficients). Here the values within the censoring rate function are chosen such that the average censoring rate across the entire simulated dataset is also controlled as 30\%.
\end{itemize}
Overall, the proposed method has robust performance to different censoring mechanisms and censoring rates.


\begin{table}[!ht]
\caption{Results on the Erd\H{o}s-R\'enyi predictor graph, $p=100$, $\rho_{0} = 0.1$, {censoring rate = 50\%}.}
\centering
\begin{tabular}{ cccc } 
\specialrule{.08em}{0em}{0em}
 Model & $\ell_{2}$ norm & $\ell_{1}$ norm & c-index 
 \\ \hline
 \textbf{Graph regularizer} & \textbf{60.34} (0.40) & \textbf{373.83} (1.50) & \textbf{0.65} (0.04) \\
 Lasso & 60.56 (0.08) & 374.93 (0.30) & 0.63 (0.04) \\
 Ridge regression & 60.59 (0.04) & 375.14 (0.18) & 0.60 (0.04) \\
 Elastic net & 60.56 (0.08) & 374.94 (0.28) & 0.62 (0.04) \\
 SCAD & 60.59 (0.05) & 374.96 (0.16) & 0.63 (0.04) \\
 Alasso & 60.47 (0.20) & 374.75 (0.76) & 0.60 (0.05) \\
 Cox without regularization & Inf (-) & Inf (-) & 0.56 (0.05) \\
\specialrule{.08em}{0em}{0em}
\end{tabular}
\label{tab: sparse0.1-censor05}
\end{table}
 
\begin{table}[!ht]
\caption{Results on the Erd\H{o}s-R\'enyi predictor graph, $p=100$, $\rho_{0} = 0.1$, {censoring rate = 40\%}.}
\centering
\begin{tabular}{ cccc } 
\specialrule{.08em}{0em}{0em}
 Model & $\ell_{2}$ norm & $\ell_{1}$ norm & c-index 
 \\ \hline
 \textbf{Graph regularizer} & \textbf{60.16} (0.53) & \textbf{373.24} (1.78) & \textbf{0.68} (0.04) \\
 Lasso & 60.52 (0.12) & 374.81 (0.33) & 0.66 (0.04) \\
 Ridge regression & 60.58 (0.05) & 375.15 (0.21) & 0.63 (0.04) \\
 Elastic net & 60.53 (0.10) & 374.85 (0.29) & 0.66 (0.04) \\
 SCAD & 60.58 (0.06) & 374.89 (0.21) & 0.65 (0.04) \\
 Alasso & 60.39 (0.23) & 374.51 (0.72) & 0.64 (0.05) \\
 Cox without regularization & Inf (-) & Inf (-) & 0.56 (0.05) \\
\specialrule{.08em}{0em}{0em}
\end{tabular}
\label{tab: sparse0.1-censor04}
\end{table}

\begin{table}[!ht]
\caption{Results on the Erd\H{o}s-R\'enyi predictor graph, $p=100$, $\rho_{0} = 0.1$, {censoring rate = 20\%}.}
\centering
\begin{tabular}{ cccc } 
\specialrule{.08em}{0em}{0em}
 Model & $\ell_{2}$ norm & $\ell_{1}$ norm & c-index 
 \\ \hline
 \textbf{Graph regularizer} & \textbf{59.82} (0.65) & \textbf{372.00} (2.14) & \textbf{0.74} (0.04) \\
 Lasso & 60.40 (0.18) & 374.40 (0.62) & 0.73 (0.04) \\
 Ridge regression & 60.57 (0.06) & 375.06 (0.09) & 0.70 (0.04) \\
 Elastic net & 60.42 (0.15) & 374.49 (0.52) & 0.72 (0.04) \\
 SCAD & 60.56 (0.07) & 374.85 (0.20) & 0.70 (0.04) \\
 Alasso & 60.18 (0.29) & 373.77 (1.06) & 0.70 (0.05) \\
 Cox without regularization & Inf (-) & Inf (-) & 0.56 (0.06) \\
\specialrule{.08em}{0em}{0em}
\end{tabular}
\label{tab: sparse0.1-censor02}
\end{table}

\begin{table}[!ht]
\caption{Results on the Erd\H{o}s-R\'enyi predictor graph, $p=100$, $\rho_{0} = 0.1$, {censoring rate = 30\%}, and training size = testing size = 100. Covariate dependent censoring (default in R).}
\centering
\begin{tabular}{ cccc } 
\specialrule{.08em}{0em}{0em}
 Model & $\ell_{2}$ norm & $\ell_{1}$ norm & c-index 
 \\ \hline
 \textbf{Graph regularizer} & \textbf{59.74} (0.57) & \textbf{371.51} (1.88) & \textbf{0.70} (0.06) \\
 Lasso & 60.32 (0.19) & 374.44 (0.55) & 0.68 (0.06) \\
 Ridge regression & 60.52 (0.02) & 375.31 (0.09) & 0.66 (0.06) \\
 Elastic net & 60.32 (0.17) & 374.54 (0.47) & 0.68 (0.06) \\
 SCAD & 60.54 (0.11) & 374.84 (0.29) & 0.67 (0.06) \\
 Alasso & 60.10 (0.30) & 373.87 (1.01) & 0.66 (0.07) \\
 Cox without regularization & Inf (-) & Inf (-) & 0.56 (0.07) \\
\specialrule{.08em}{0em}{0em}
\end{tabular}
\label{tab: sparse0.1-censor-dep}
\end{table}

\begin{table}[!ht]
\caption{Results on the Erd\H{o}s-R\'enyi predictor graph, $p=100$, $\rho_{0} = 0.1$, {censoring rate = 30\%}, and training size = testing size = 100. Covariate-dependent censoring (on the top 4 covariates).}
\centering
\begin{tabular}{ cccc } 
\specialrule{.08em}{0em}{0em}
 Model & $\ell_{2}$ norm & $\ell_{1}$ norm & c-index 
 \\ \hline
 \textbf{Graph regularizer} & {\bf 59.56} (0.81) & {\bf 370.75} (2.64) & {\bf 0.71} (0.05) \\
 Lasso & 60.22 (0.21) & 374.09 (0.72) & 0.69 (0.05) \\
 Ridge regression & 60.52 (0.03) & 375.29 (0.14) & 0.66 (0.05) \\
 Elastic net & 60.27 (0.17) & 374.33 (0.51) & 0.68 (0.05) \\
 SCAD & 60.51 (0.09) & 374.76 (0.30) & 0.67 (0.05) \\
 Alasso & 59.95 (0.31) & 373.33 (1.21) & 0.66 (0.06) \\
 Cox without regularization & Inf (-) & Inf (-) & 0.56 (0.05) \\
\specialrule{.08em}{0em}{0em}
\end{tabular}
\label{tab: sparse0.1-censor-dep1}
\end{table}

%
\newpage
\section{Proofs}\label{app:proof}
In this appendix, we provide the proof of the main theorems presented in the paper. We first define some empirical counterparts for the corresponding population quantities in \eqref{eq:def_s}:
\begin{eqnarray*}
S^{(0)} (\bbeta,t) &=& \frac1n\sum_{i=1}^n Y_i(t)\exp\{\bbeta^{\top}\bm x_i\},  \\
S^{(1)} (\bbeta,t) &=& \frac1n\sum_{i=1}^n Y_i(t)\bm x_i \exp\{\bbeta^{\top}\bm x_i\} ,  \\
S^{(2)} (\bbeta,t) &=& \frac1n\sum_{i=1}^n Y_i(t) \bm x_i\bm x_i^{\top}\exp\{\bbeta^{\top}\bm x_i\}. 
\end{eqnarray*}
Therefore, the partial likelihood score function can be written as
\begin{equation}
U(\bbeta) =  \frac{1}{n}\frac{\partial \ell}{\partial \bbeta} = \frac1n\sum_{i=1}^n \int_0^1 \Big\{ \bm x_i - \frac{S^{(1)} (\bbeta,t) }{S^{(0)} (\bbeta,t) }\Big\}dN_i(t).  
\end{equation}
Furthermore, the empirical Fisher information matrix can be calculated as
\begin{equation}\label{eq:emp_Fisher}
\Sigma(\bbeta) \!=\! -\frac{\partial U(\bbeta)}{\partial \bbeta} \!=\! \frac1n\sum_{i=1}^n\int_0^1\Big\{ \frac{S^{(2)} (\bbeta,t) }{S^{(0)} (\bbeta,t) } - \big( \frac{S^{(1)} (\bbeta,t) }{S^{(0)} (\bbeta,t) }  \big)\big( \frac{S^{(1)} (\bbeta,t) }{S^{(0)} (\bbeta,t) }  \big)^{\top}\Big\} dN_i(t).      
\end{equation}

We then list some lemmas which will be used in the following proofs. The following lemma establishes the concentration property of the score function $U(\bbeta_0)$ around 0.
\begin{lemma}[\textnormal{[\citenum{sun2014network}, Lemma A.2]}]\label{lem:lemma3}
Under Assumptions\,\ref{assup1} (1-2), there exists constants $C,D,K$ such that 
\[
\mathbb P[|U_j(\bbeta_0)|\geq C n^{-1/2}(1+x)] \leq D e^{-K(x^2 \wedge n)},
\]
for all $x>0$ and $j=1,\ldots,p$, where $U_j(\bbeta_0)$ is the $j$-th entry of the score function $U(\bbeta_0)$.
\end{lemma} 

The following Lemma establishes the concentration of the empirical information matrix in a neighborhood of $\bbeta_0$. 
\begin{lemma}[\textnormal{[\citenum{sun2014network}, Lemma A.3]}]\label{lem:lemma4}
Under Assumptions\,\ref{assup1} (1-2), there exists constants $C',D',K'$ such that for a neighborhood $\mathcal B\subset \mathbb{R}^{p}$ of $\bbeta_0$, we have
\[
\begin{aligned}
& \mathbb P\Big\{\sup_{\bbeta\in\mathcal B} |I_{i,j}(\bbeta) - \Sigma_{i,j}(\bbeta)| \geq C'\sqrt{\frac{p}{n}}(1+x)\Big\} 
\leq  D' e^{-K'(px^2 \wedge n)},
\end{aligned}
\]
where $I_{i,j}(\bbeta)$ and $\Sigma_{i,j}(\bbeta)$ are the $(i,j)$-th entry of the Fisher information matrix $I(\bbeta)$ and empirical information matrix $\Sigma(\bbeta)$ defined in \eqref{eq:emp_Fisher}, respectively.
\end{lemma} 
We refer to the supplementary material of \cite{sun2014network} for the detailed proof of Lemma \ref{lem:lemma3} and Lemma \ref{lem:lemma4}. As a consequence of the above two Lemmas, we now present a similar concentration result for the ``restricted eigenvalue'' of the Fisher information in the neighborhood $\mathcal B$.


\begin{lemma}\label{lem:lem6}
Under Assumptions\,\ref{assup1} (1-2) and Assumption\,\ref{assup2}, there exists constants $D'',K''>0$ such that for a neighborhood $\mathcal B\subset \mathbb{R}^{p}$ of $\bbeta_0$, we have with probability at least $1-p^2D''e^{-K''n \tau_{\min}^4\kappa^2/p^2}$, 
\[
\begin{aligned}
&\inf_{\bbeta\in\mathcal B} \inf_{ \substack{\bm\xi \in \mathbb R^p \backslash \{0\}\\|J|\leq s_0}} 
\inf_{\substack{(V^{(1)},V^{(2)},\ldots,V^{(p)}) \in \mathcal{U}(\bm\xi) \\ \sum_{k\notin J}\tau_k \Vert V^{(k)} \Vert_2 \leq 3\sum_{k\in J}\tau_k \Vert V^{(k)} \Vert_2 }}\frac12 \frac{  (\sum_{k=1}^p V^{(k)})^{\top} \Sigma(\bbeta) (\sum_{k=1}^p V^{(k)}) }{\sum_{k\in J} \tau_k^2 \Vert V^{(k)} \Vert_2^2} 
\geq \frac{\kappa}{2}.
\end{aligned}
\]
\end{lemma}

\begin{proof}
By Lemma\,\ref{lem:lemma4}, we have with probability $1-p^2D''e^{-K''n\epsilon^2}$, $\sup_{\bbeta\in\mathcal B} |I_{i,j}(\bbeta) - \Sigma_{i,j}(\bbeta)| \leq \epsilon$ for every entry $(i,j)$. Note that when $\sup_{\bbeta\in\mathcal B} |I_{i,j}(\bbeta) - \Sigma_{i,j}(\bbeta)| \leq \epsilon$, we have
\[
\begin{aligned}
& \Big|(\sum_{k=1}^p V^{(k)})^{\top} (\Sigma(\bbeta) - I(\bbeta)) (\sum_{k=1}^p V^{(k)})\Big| \\
\leq & \epsilon \Big\Vert \sum_{k=1}^p V^{(k)}\Big\Vert_1^2 \leq \epsilon p \Big\Vert\sum_{k=1}^p V^{(k)} \Big\Vert_2^2\\
\leq & \epsilon p \frac{ (\sum_{k} \tau_k \Vert V^{(k)} \Vert_2)^2}{\tau_{\min}^2} \\
=& \epsilon p \frac{ (\sum_{k\notin J} \tau_k \Vert V^{(k)} \Vert_2 + \sum_{k\in J} \tau_k \Vert V^{(k)} \Vert_2)^2}{\tau_{\min}^2} \\
\leq & 16\epsilon p \frac{ (\sum_{k\in J} \tau_k \Vert V^{(k)} \Vert_2)^2}{\tau_{\min}^2}, 
\end{aligned}
\]
where the last inequality is due to the imposed condition that $\sum_{k\notin J}\tau_k \Vert V^{(k)} \Vert_2 \leq 3\sum_{k\in J}\tau_k \Vert V^{(k)} \Vert_2$. Thus we have
\[
\frac{|(\sum_{k=1}^p V^{(k)})^{\top} (\Sigma(\bbeta) - I(\bbeta)) (\sum_{k=1}^p V^{(k)})| }{(\sum_{k\in J} \tau_k \Vert V^{(k)} \Vert_2)^2 } \leq \frac{16\epsilon p}{\tau_{\min}^2}.
\]
Setting $\epsilon = \frac{\tau_{\min}^2\kappa}{16p}$ yields $ \frac{16\epsilon p}{\tau_{\min}^2} \leq \kappa$, thus we complete the proof. This result shows that with high probability, the empirical information matrix $\Sigma$ shares almost the same properties with the population information matrix $I$. 
\end{proof}

We also present a useful Lemma from \cite{yu2016sparse} regarding the optimal decomposition for the graph-based regularization term.  
\begin{lemma}[\textnormal{[\citenum{yu2016sparse}, Lemma 2]}]\label{lem:lemma2}
For any predictor graph G and positive weights $\tau_i$, suppose $V^{(1)}$, $V^{(2)}$, $\ldots$, $V^{(p)}$ is an optimal decomposition of $\bbeta\in \mathbb R^p$, then for any $S \subseteq \{ 1,2,\ldots,p\}$, $\{V^{(j)}, j\in S\}$ is also an optimal decomposition of $\sum_{j\in S} V^{(j)}$.
\end{lemma}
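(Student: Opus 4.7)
The plan is to prove Lemma \ref{lem:lemma2} by a standard exchange argument (proof by contradiction), exploiting the additive separability of both the sum-constraint and the objective $\sum_k \tau_k \Vert V^{(k)} \Vert_2$ across indices $k$.

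First, I would fix an optimal decomposition $\{V^{(1)},\ldots,V^{(p)}\}$ of $\bbeta$, a subset $S \subseteq \{1,\ldots,p\}$, and define the partial sum $\bbeta_S := \sum_{j \in S} V^{(j)}$. Note that since $\mathrm{supp}(V^{(j)}) \subseteq \mathcal N_j$ for each $j \in S$, the collection $\{V^{(j)}: j \in S\}$ is automatically a feasible decomposition of $\bbeta_S$ in the sense required by the definition \eqref{eq:norm} (with the remaining $p - |S|$ summands set to zero). Hence we only need to verify optimality.

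Suppose, for contradiction, that $\{V^{(j)}: j \in S\}$ is not an optimal decomposition of $\bbeta_S$. Then there exists an alternative feasible decomposition $\{W^{(j)}: j \in S\}$ (with $\sum_{j\in S} W^{(j)} = \bbeta_S$ and $\mathrm{supp}(W^{(j)}) \subseteq \mathcal N_j$) strictly improving the cost, that is,
\[
\sum_{j \in S} \tau_j \Vert W^{(j)} \Vert_2 \; < \; \sum_{j \in S} \tau_j \Vert V^{(j)} \Vert_2.
\]
I would then splice this alternative back into the original decomposition: define $\widetilde V^{(j)} := W^{(j)}$ for $j \in S$ and $\widetilde V^{(j)} := V^{(j)}$ for $j \notin S$. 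Feasibility for $\bbeta$ is immediate, since $\sum_{j=1}^p \widetilde V^{(j)} = \sum_{j \in S} W^{(j)} + \sum_{j \notin S} V^{(j)} = \bbeta_S + (\bbeta - \bbeta_S) = \bbeta$, and the support condition $\mathrm{supp}(\widetilde V^{(j)}) \subseteq \mathcal N_j$ holds coordinate-wise. But then
\[
\sum_{j=1}^p \tau_j \Vert \widetilde V^{(j)} \Vert_2 \; = \; \sum_{j \in S} \tau_j \Vert W^{(j)} \Vert_2 + \sum_{j \notin S} \tau_j \Vert V^{(j)} \Vert_2 \; < \; \sum_{j=1}^p \tau_j \Vert V^{(j)} \Vert_2 \; = \; \Vert \bbeta \Vert_{G,\bm\tau},
\]
contradicting the optimality of $\{V^{(1)},\ldots,V^{(p)}\}$ for $\bbeta$.

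There is essentially no technical obstacle here: the result is a direct consequence of the fact that both the feasibility constraints and the objective in \eqref{eq:norm} decouple across the index $k$, so any improvement on a subset lifts trivially to an improvement on the whole. The only subtlety worth mentioning is simply checking that the ``complementary'' piece $\bbeta - \bbeta_S = \sum_{j \notin S} V^{(j)}$ remains validly decomposed by the untouched $V^{(j)}$'s, which is immediate. The entire proof should take only a few lines in the final write-up.
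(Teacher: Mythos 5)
The paper itself does not prove this lemma; it is imported verbatim from \cite{yu2016sparse}, so there is no in-paper argument to compare against. Your exchange/splicing argument is the natural proof and the core idea is right, but as written it has one gap: when you suppose $\{V^{(j)}: j\in S\}$ is not optimal for $\bbeta_S$, you take the improving competitor to be a family $\{W^{(j)}: j\in S\}$ indexed only by $S$. Under the definition \eqref{eq:norm}, however, a feasible decomposition of $\bbeta_S$ may use \emph{all} $p$ components, so the strictly better competitor you must rule out is in general $\{W^{(1)},\ldots,W^{(p)}\}$ with $\sum_{k=1}^p W^{(k)}=\bbeta_S$ and $\sum_{k=1}^p\tau_k\Vert W^{(k)}\Vert_2<\sum_{j\in S}\tau_j\Vert V^{(j)}\Vert_2$. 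Your splice, which simply replaces the $S$-indexed blocks, does not directly apply to such a competitor.

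The fix is short: splice by setting $\widetilde V^{(j)}:=W^{(j)}$ for $j\in S$ and $\widetilde V^{(j)}:=W^{(j)}+V^{(j)}$ for $j\notin S$. Feasibility for $\bbeta$ holds since $\sum_{j=1}^p\widetilde V^{(j)}=\bbeta_S+(\bbeta-\bbeta_S)=\bbeta$ and each summand is supported in $\mathcal N_j$; then the triangle inequality for $\Vert\cdot\Vert_2$ gives
\[
\sum_{j=1}^p\tau_j\Vert\widetilde V^{(j)}\Vert_2\le\sum_{k=1}^p\tau_k\Vert W^{(k)}\Vert_2+\sum_{j\notin S}\tau_j\Vert V^{(j)}\Vert_2<\sum_{j=1}^p\tau_j\Vert V^{(j)}\Vert_2=\Vert\bbeta\Vert_{G,\bm\tau},
\]
contradicting optimality of $\{V^{(1)},\ldots,V^{(p)}\}$. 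With this one-line adjustment your proof is complete; everything else in your write-up (feasibility of the restricted family, the decoupling of the objective) is correct.
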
 

Using the above Lemmas, below we present the complete proof to the finite sample bound in Theorem~\ref{thm:oracle}.
\begin{proof}[Proof of Theorem~\ref{thm:oracle}]
Suppose $\widehat\bbeta$ is the optimal solution to the regularization problem \eqref{eq:main}, then for any $\bbeta \in \mathbb R^p$, we have
\[
-\frac1n \ell(\widehat\bbeta) + \lambda\Vert\widehat\bbeta\Vert_{G,\tau} \leq 
       -\frac1n \ell(\bbeta) + \lambda\Vert\bbeta\Vert_{G,\tau}.
\]
Let $\bbeta = \bbeta_0$, we have
\begin{equation}\label{eq:betahat_likelihood}
\frac1n \left\{ \ell(\bbeta_0) - \ell(\widehat\bbeta)\right\} \leq  \lambda \left( \Vert\bbeta_0\Vert_{G,\tau} -\Vert\widehat\bbeta\Vert_{G,\tau} \right).    
\end{equation}
Let $\{ S^{(1)}, \ldots, S^{(p)} \}\in \mathcal U(\bbeta_0)$ be an arbitrary optimal decomposition of $\bbeta_0$, and let $\{ T^{(1)}, \ldots, T^{(p)} \}\in \mathcal U(\widehat\bbeta -\bbeta_0)$ be an arbitrary optimal decomposition of $\widehat\bbeta -\bbeta_0$. We have $\widehat\bbeta -\bbeta_0 = \sum_{i=1}^p T^{(i)}$. 

We first analyze the right-hand-side of \eqref{eq:betahat_likelihood}, 
By Assumption \ref{assup2} (1), we can choose $S^{(j)} = 0, \forall j \in J_0^c$, thus $\bbeta_0 =  \sum_{j\in J_0} S^{(j)}$, and
\[
\begin{aligned}
\Vert \widehat\bbeta\Vert_{G,\tau} & = \Vert \widehat\bbeta - \bbeta_0 + \bbeta_0 \Vert_{G,\tau} \\
& = \Vert \sum_{j\in J_0} T^{(j)}  + \sum_{j\notin J_0} T^{(j)}  +\sum_{j\in J_0} S^{(j)} \Vert_{G,\tau}  \\
& \geq \Vert \sum_{j\notin J_0} T^{(j)}  +\sum_{j\in J_0} S^{(j)} \Vert_{G,\tau}  - \Vert \sum_{j\in J_0} T^{(j)} \Vert_{G,\tau} \\
& = \Vert \sum_{j\notin J_0} T^{(j)}\Vert_{G,\tau}  +  \Vert \sum_{j\in J_0} S^{(j)} \Vert_{G,\tau}  - \Vert \sum_{j\in J_0} T^{(j)} \Vert_{G,\tau}.
\end{aligned}
\]
Note that $ \Vert \sum_{j\in J_0} S^{(j)} \Vert_{G,\tau}=\Vert \bbeta_0\Vert_{G,\tau}$, thus
\[
\begin{aligned}
\Vert \bbeta_0\Vert_{G,\tau} - \Vert \widehat\bbeta\Vert_{G,\tau} & \leq \Vert \sum_{j\in J_0} T^{(j)} \Vert_{G,\tau} - \Vert \sum_{j\notin J_0} T^{(j)}\Vert_{G,\tau}.
\end{aligned}
\]

For the left-hand-side of \eqref{eq:betahat_likelihood} by expressing the log-likelihood function as a quadratic function in a neighborhood of the true parameter $\bbeta_0$, similar to the technique used in \cite{andersen1982cox,fan2002variable}, we have
\begin{equation}\label{eq:taylor}
\frac1n\left\{\ell(\bbeta_0) - \ell(\widehat\bbeta)\right\} 
= U(\bbeta_0)^\top(\bbeta_0-\widehat\bbeta) +\frac12(\widehat\bbeta - \bbeta_0)^{\top} \Sigma(\bar{\bbeta})(\widehat\bbeta- \bbeta_0),
\end{equation}
where $\bar{\bbeta}$ is a point in the line segment between $\bbeta_0$ and $\widehat\bbeta$. By Lemma\,\ref{lem:lemma3}, with  probability $1-pD e^{-Kn\epsilon^2}$ we have $\Vert U(\bbeta_0)\Vert_\infty\leq \epsilon$. Under the event that $\Vert U(\bbeta_0)\Vert_\infty\leq \epsilon$, combining with the right-hand-side, we have
\[
\begin{aligned}
& \frac12(\widehat\bbeta - \bbeta_0)^{\top}  \Sigma(\bar{\bbeta}) (\widehat\bbeta- \bbeta_0) \\
\leq & \lambda(\Vert \sum_{j\in J_0} T^{(j)} \Vert_{G,\tau} - \Vert \sum_{j\notin J_0} T^{(j)}\Vert_{G,\tau}) + U(\bbeta_0)^\top(\widehat\bbeta-\bbeta_0) \\
\overset{(i)}{\leq}& \lambda(\Vert \sum_{j\in J_0} T^{(j)} \Vert_{G,\tau} - \Vert \sum_{j\notin J_0} T^{(j)}\Vert_{G,\tau}) + \epsilon\Vert\widehat\bbeta-\bbeta_0\Vert_1 \\
\overset{(ii)}{\leq} & \lambda(\Vert \sum_{j\in J_0} T^{(j)} \Vert_{G,\tau} - \Vert \sum_{j\notin J_0} T^{(j)}\Vert_{G,\tau}) + \epsilon \sqrt{p} \left\Vert \sum_{i=1}^p T^{(i)}\right\Vert_2 \\
\leq & \lambda(\Vert \sum_{j\in J_0} T^{(j)} \Vert_{G,\tau} - \Vert \sum_{j\notin J_0} T^{(j)}\Vert_{G,\tau}) + \epsilon \sqrt{p} (\Vert \sum_{j\in J_0} T^{(j)} \Vert_{2} + \Vert \sum_{j\notin J_0} T^{(j)}\Vert_{2})\\
\overset{(iii)}{\leq} & (\lambda + \frac{\epsilon\sqrt{p}}{\tau_{\min}})\Vert \sum_{j\in J_0} T^{(j)} \Vert_{G,\tau} - (\lambda - \frac{\epsilon\sqrt{p}}{\tau_{\min}})\Vert \sum_{j\notin J_0} T^{(j)} \Vert_{G,\tau},
\end{aligned}
\]
where the inequality (i) is due to $\Vert U(\bbeta_0)\Vert_\infty\leq \epsilon$, the inequality (ii) is due to the Cauchy-Schwarz inequality, and the inequality (iii) is due to Lemma \ref{lem:lemma2}.
Select $\epsilon$ such that $\frac{\epsilon\sqrt{p}}{\tau_{\min}}\leq \frac{\lambda}{2}$, then we have
\[
\frac12(\widehat\bbeta - \bbeta_0)^{\top}  \Sigma(\bar{\bbeta}) (\widehat\bbeta- \bbeta_0) \leq \frac{3}{2}\lambda \Vert \sum_{j\in J_0} T^{(j)} \Vert_{G,\tau} - \frac{\lambda}{2} \Vert \sum_{j\notin J_0} T^{(j)} \Vert_{G,\tau}.
\]
Furthermore, notice that the particle log-likelihood function \eqref{eq:likelihood} is concave. Indeed, $\bbeta^\top \bm{x}_i$ is a linear function of $\bbeta$, and $\log\Big(\sum_{j: y_{j}\ge y_{i}}\exp(\boldsymbol{\beta}^{\top}\bm{x}_{j})\Big)$ is a convex function of $\bbeta$ since the summation of log-convex functions is also log-convex \cite{boyd2004convex}. Therefore, $\Sigma(\bar{\bbeta})$ is positive semidefinite for any $\bar{\bbeta}$, yielding $\frac12(\widehat\bbeta - \bbeta_0)^{\top}  \Sigma(\bar{\bbeta}) (\widehat\bbeta- \bbeta_0) \geq 0$, thus we have $\Vert \sum_{j\notin J_0} T^{(j)} \Vert_{G,\tau} \leq 3\Vert \sum_{j\in J_0} T^{(j)} \Vert_{G,\tau} $.

Then based on Assumption\,\ref{assup2} and Lemma\,\ref{lem:lem6}, we have
with probability at least $1-p^2D''e^{-K''n \tau_{\min}^4\kappa^2/p^2}$,
\begin{equation}\label{eq:require2}
\frac12(\widehat\bbeta - \bbeta_0)^{\top} \{ \Sigma(\bar{\bbeta})\} (\widehat\bbeta- \bbeta_0) \geq \frac\kappa 2 (\sum_{j\in J_0} \tau_j \Vert T^{(j)} \Vert_2)^2.
\end{equation}
On the other hand, condition on the event that \eqref{eq:require2} holds and $\frac{\epsilon\sqrt{p}}{\tau_{\min}}\leq \frac{\lambda}{2}$, note that we have 
\[
\begin{aligned}
&\frac\kappa 2 (\sum_{j\in J_0} \tau_j \Vert T^{(j)} \Vert_2)^2   \leq \frac32\lambda \sum_{j\in J_0} \tau_j \Vert T^{(j)} \Vert_2 \\
\Rightarrow & \Vert\sum_{j\in J_0}T^{(j)}\Vert_{G,\tau} = \sum_{j\in J_0} \tau_j \Vert T^{(j)} \Vert_2 \leq \frac{3\lambda}{\kappa}.
\end{aligned} 
\]
Furthermore, 
\begin{align*}
\Vert \widehat\bbeta - \bbeta_0\Vert_{2} 
& =     \Vert \sum_{j=1}^p T^{(j)} \Vert_2 \leq \frac{\sum_{j=1}^p \tau_j \Vert  T^{(j)} \Vert_2}{\tau_{\min}} \\
&= \frac{\sum_{j\in J_0} \tau_j \Vert T^{(j)}\Vert_2 +\sum_{j\notin J_0} \tau_j \Vert T^{(j)}\Vert_2 }{\tau_{\min}} \\
& \leq \frac{4\sum_{j\in J_0} \tau_j \Vert T^{(j)}\Vert_2 }{ \tau_{\min}}\leq \frac{12\lambda}{\kappa\tau_{\min}}.
\end{align*}
We notice that the above results are achieved by conditioning on the event $\Vert U(\bbeta_0)\Vert_\infty\leq \lambda\tau_{\min}/(2\sqrt{p})$ and the event in \eqref{eq:require2}, which hold simultaneously with probability $1-pD e^{-Kn\lambda^2 \tau_{\min}^2/p} - p^2D''e^{-K''n \tau_{\min}^4\kappa^2/p^2}$. Thus the proof is completed.
\end{proof}

Next, we give the proof of the asymptotic normality result in Theorem~\ref{thm:asynormal}.
\begin{proof}[Proof to Theorem~\ref{thm:asynormal}]
For each $\bu \in \mathbb R^p$, define 
$$Q_n(\bu) = -\ell(\bbeta_0 + n^{-1/2}\bu) + n\lambda\Vert \bbeta_0 + n^{-1/2}\bu \Vert_{G,\tau}.
$$
Since $\widehat\bbeta$ is the maximum penalized likelihood estimate, we define
\[
\hat \bu := \sqrt{n}(\widehat\bbeta- \bbeta_0) = \arg\min_{\bu \in \mathbb R^p}Q_n(\bu).
\]
We consider the asymptotic regime, and based on the local asymptotic quadratic property for the partial likelihood function as shown in \cite{andersen1982cox,fan2002variable}, we can write
\begin{align*}
Q_n(\bu) - Q_n(\bm 0) =& \ell(\bbeta_0) -  \ell(\bbeta_0 + n^{-1/2}\bu)   + n\lambda( \Vert \bbeta_0+ n^{-1/2}\bu \Vert_{G,\tau} - \Vert \bbeta_0  \Vert_{G,\tau} ) \\
 =& \frac12 \bu^{\top} I(\bbeta_0)\bu + o_P(1)+  n\lambda( \Vert \bbeta_0 + n^{-1/2}\bu \Vert_{G,\tau} - \Vert \bbeta_0  \Vert_{G,\tau} ). 
\end{align*}
For the second term, we have
\[
\begin{aligned}
& \Vert \bbeta_0 + n^{-1/2}\bu \Vert_{G,\tau} - \Vert \bbeta_0  \Vert_{G,\tau}  \\
=&  \Vert (\bbeta_0 + n^{-1/2}\bu)_{J_0} \Vert_{G,\tau} - \Vert \bbeta_0  \Vert_{G,\tau} + \Vert ( n^{-1/2}\bu)_{J_0^c} \Vert_{G,\tau} .
\end{aligned}
\]
Suppose $V^{(1)}, \ldots, V^{(p)}$ is an optimal decomposition of $\bu$, then by triangle inequality we have
\[
\begin{aligned}
& n\lambda( \Vert (\bbeta_0 + n^{-1/2}\bu)_{J_0} \Vert_{G,\tau} - \Vert \bbeta_0  \Vert_{G,\tau} )  \leq \sqrt{n}\lambda \Vert \bu_{J_0} \Vert_{G,\tau}=  \sqrt{n}\lambda \sum_{j\in J_0} \tau_j \Vert V^{(j)} \Vert_2.
\end{aligned}
\]
If $\sqrt{n}\lambda \rightarrow 0$ and $\tau_j = O(1)$ for each $j \in J_0$, then for each fixed $\bu$, we have
\begin{equation}\label{eq:u_j0}
n\lambda( \Vert (\bbeta_0 + n^{-1/2}\bu)_{J_0} \Vert_{G,\tau} - \Vert \bbeta_0  \Vert_{G,\tau} ) \rightarrow 0, \text{ as } n \rightarrow \infty.
\end{equation}
If $n^{(\gamma + 1)/2}\lambda \rightarrow \infty$, $\bu_{J_0^c} \neq 0$, and $\lim\inf_{n\rightarrow\infty}n^{-\gamma/2}\tau_j > 0$ for each $j\in J_0^c$, then 
\begin{equation}\label{eq:u_jc}
\begin{aligned}
n\lambda\Vert (n^{-1/2}\bu)_{J_0^c} \Vert_{G,\tau}  = &\sqrt{n}\lambda \sum_{j\in J_0^c} \tau_j \Vert V^{(j)} \Vert_2 \\
 = &n^{(\gamma + 1)/2}\lambda \cdot n^{-\gamma/2} \sum_{j\in J_0^c} \tau_j \Vert V^{(j)} \Vert_2 \rightarrow \infty.  
\end{aligned}
\end{equation}
Combining \eqref{eq:u_j0} and \eqref{eq:u_jc}, we have:
\[
\begin{aligned}
& Q_n(\bu) - Q_n(0)  \overset{d}{\rightarrow} \begin{cases}
\ell(\bbeta_0) -  \ell(\bbeta_0 + n^{-1/2}\bu) & \text{if supp}(\bu) \subset J_0, \\
\infty & \text{o.w.} 
\end{cases}
\end{aligned}
\]
This implies that 
\[
\widehat\bbeta_{J_0^c} \overset{d}{\rightarrow} 0.
\]
We note that $\hat\bu =\arg\min \{Q_n(\bu) - Q_n(0)\}$, thus it suffices to show that the $\hat\bu = \arg\max_{\text{supp}(\bu) \subset J_0} l(\bbeta_0 + n^{-1/2}\bu)$ is asymptotically normal distributed. 
To prove this, recall the first-order derivative of the partial log-likelihood with respect to $\bbeta$ is $U(\bbeta)$ and the second-order derivative is $-\Sigma(\bbeta)$. 
Using Taylor expansion, we have
\[
U(\widehat\bbeta) - U(\bbeta_0) = -\Sigma(\bbeta^*)(\widehat\bbeta - \bbeta_0),
\]
where $\bbeta^*$ is on the line segment between $\widehat\bbeta$ and $\bbeta_0$, and $\Sigma(\bbeta)$ is a positive semidefinite matrix. By Theorem 3.2 in \cite{andersen1982cox}, we have as $n \rightarrow \infty$,
\[
\frac{1}{\sqrt{n}} U_{J_0}(\bbeta_0) \overset{d}{\rightarrow} N(0,I_{J_0}(\bbeta_0)),  \ \frac{1}{n}\Sigma(\bbeta^*) \overset{p}{\rightarrow} I_{J_0}(\bbeta_0),
\]
where $U_{J_0}(\bbeta_0)$ consists of the elements of $U(\bbeta_0)$ with index belonging to the set $J_0$, and $I_{J_0}(\bbeta_0)$ is a square matrix with rows and columns belong to the index set $J_0$. 
Since $U(\widehat\bbeta) = 0$, we have $-\Sigma(\bbeta^*)(\widehat\bbeta - \bbeta_0) = U(\bbeta_0)$, thus by Slutsky's Theorem, we have
\[
\sqrt{n}I_{J_0}(\bbeta_0) (\widehat\bbeta_{J_0} - \bbeta_{0,J_0}) \overset{d}{\rightarrow} N(0,I_{J_0}(\bbeta_0)),
\]
which translates into
\[
\sqrt{n}(\widehat\bbeta_{J_0} - \bbeta_{0,J_0}) \overset{d}{\rightarrow} N(0, I_{J_0}(\bbeta_0)^{-1}).
\]
The proof is completed.
\end{proof}

\end{appendices}
\bibliography{graph_reg,references}

\end{document}